\theoremstyle{plain}
\newtheorem{theo}{Theorem}[section]
\newtheorem{prop}[theo]{Proposition}
\newtheorem{lemma}[theo]{Lemma}
\theoremstyle{definition}
\newtheorem{defi}[theo]{Definition}
\newtheorem{ex}[theo]{Example}
\theoremstyle{remark}
\newtheorem{remark}[theo]{Remark}
\newtheorem{notation}[theo]{Notation}
\DeclareMathOperator{\lcm}{lcm}
\DeclareMathOperator{\Sing}{Sing}
\DeclareMathOperator{\Gal}{Gal}
\DeclareMathOperator{\Mat}{Mat}
\DeclareMathOperator{\Spec}{Spec}
\DeclareMathOperator{\gr}{Gr}
\def\Z{\mathbb{Z}}
\def\N{\mathbb{N}}
\def\C{\mathbb{C}}
\def\Q{\mathbf{Q}}
\def\R{\mathbb{R}}
\def\P{\mathbb{P}}
\def\w{\omega}
\def\l{\ell}
\def\E{\mathcal{E}}
\newcommand\bd{\mathbf{d}}
\newcommand\bxi{\boldsymbol{\xi}}
\title{\bf Semistable Reduction of a Normal Crossing $\mathbb{Q}$-Divisor}
\author{Jorge Mart\'{\i}n-Morales\footnote{Partially supported by the Spanish Ministry of Education MTM2010-21740-C02-02, E15 Grupo Consolidado Geometría from the Gobierno de Aragón, FQM-333 from Junta de Andalucía, and PRI-AIBDE-2011-0986 Acción Integrada hispano-alemana.}}
\date{\footnotesize Centro Universitario de la Defensa - IUMA.\\ Academia General Militar, Ctra.~de Huesca s/n.\\ 50090, Zaragoza, Spain.\\ jorge@unizar.es}
\begin{document}

\maketitle

\vspace{-0.25cm}

\begin{abstract}
In a previous work we have introduced the notion of embedded $\Q$-resolution, which essentially consists in allowing the final ambient space to contain abelian quotient singularities, and A'Campo's formula was calculated in this setting. Here we study the semistable reduction associated with an embedded $\mathbf{Q}$-resolution so as to compute the mixed Hodge structure on the cohomology of the Milnor fiber in the isolated case using a generalization of Steenbrink's spectral sequence. Examples of Yomdin-L\^{e} surface singularities are presented as an application.

\vspace{0.25cm}

\noindent \textit{Keywords:} Monodromy, embedded $\Q$-resolution, semistable reduction, mixed Hodge structure, Steenbrink's spectral sequence.

\vspace{0.2cm}

\noindent \textit{MSC 2010:} 32S25, 14D05, 32S45, 32S35.
\end{abstract}


\section*{Introduction}

One of the main invariants of a given hypersurface singularity is the mixed Hodge
structure (MHS) on the cohomology of the Milnor fiber. In the isolated case, 
Steenbrink \cite{Steenbrink77} gave a method for computing this Hodge structure
using a spectral sequence that is constructed from the divisors associated with
the semistable reduction of an embedded
resolution, cf.~\cite{Varchenko80, Varchenko81}.

However, in practice the combinatorics of the exceptional divisor of the resolution 
is often so complicated that the study of the spectral sequence becomes very hard, 
see e.g.~\cite{Artal94} where an embedded resolution and its associated semistable 
reduction for superisolated surface singularities is computed using blow-ups at 
points and rational curves.

After the semistable reduction process the new ambient space contains normal 
singularities which are obtained as the quotient of a ball in $\C^n$ by the linear 
action of a finite group. Spaces admitting only such singularities are called {\em 
$V$-manifolds}. They were introduced in~\cite{Satake56} and have the same 
homological properties over $\mathbb{Q}$ as manifolds, e.g.~they admit a Poincaré 
duality if they are compact and carry a pure Hodge structure if they are compact 
and Kähler \cite{Baily56}. Moreover, a natural notion of normal crossing 
divisor can be defined on $V$-manifolds \cite{Steenbrink77}.

Motivated by this fact and in order to try to simplify the combinatorics of the 
exceptional divisor mentioned above, we introduced the notion of \emph{embedded $\Q$-resolution}~\cite{Martin11PhD}. The idea is as follows. Classically an embedded resolution of $\{f=0\} 
\subset \C^{n+1}$ is a proper map $\pi: X \to (\C^{n+1},0)$ from a smooth variety 
$X$ satisfying, among other conditions, that $\pi^{*}(\{f=0\})$ is a normal 
crossing divisor. To weaken the condition on the preimage of the singularity we 
allow the new ambient space $X$ to contain abelian quotient singularities and the 
divisor $\pi^{*}(\{f=0\})$ to have normal crossings on $X$.

Hence the motivation for using embedded $\Q$-resolutions rather than 
standard ones is twofold. On the one hand, they are a natural generalization of the 
usual embedded resolutions, for which the invariants above are expected to be 
calculated effectively. On the other hand, the combinatorial and computational 
complexity of embedded $\Q$-resolutions is much simpler, but they keep as much 
information as needed for the understanding of the topology of the singularity.

For instance, the behavior of the Lefschetz numbers and the zeta function of the
monodromy in this setting was treated in~\cite{Martin11}
providing the corresponding A'Campo's formula~\cite{ACampo75}. Also, for plane
curves, the local $\delta$-invariant and explicit formulas for the
self-intersections numbers of the exceptional divisors were calculated in
\cite{CMO12} and \cite{AMO11b} respectively.

In this paper we continue our study about embedded $\Q$-resolutions.
In particular, the semistable reduction of a normal crossing
$\mathbb{Q}$-divisor on an abelian quotient singularity is investigated. 
The main idea behind this construction, as mentioned above, is that in the classical case after the semistable reduction the ambient space
already contains quotient singularities. Our main result,
Theorem~\ref{SE_reduction}, says that the same is true for embedded
$\Q$-resolutions and hence Steenbrink's arguments can be
adapted to construct construct a spectral sequence converging to the cohomology
of the Milnor fiber thus providing a MHS on
$H^q(F,\C)$, see Theorem~\ref{main_th_steenbrink}.
Since the embedded $\Q$-resolution can be chosen so that ``almost every''
exceptional divisor contributes to the complex monodromy, our spectral sequence is 
finer in the sense that fewer divisors appear in the semistable reduction and thus 
the combinatorics of the spectral sequence will be simpler.

As a by-product we show that the Jordan blocks of
maximal size in the monodromy are easily calculated just by looking at the
dual complex associated with the semistable reduction of a $\Q$-resolution,
see Proposition~\ref{maximal_Jordan_blocks}.

Note that the tools developed in \cite{GLM97} for the monodromy zeta function
can not be generalized for computing more involved invariants as the MHS of the Milnor fiber.

This work in combination with~\cite{Martin12} can be considered as the first
steps in the computation of MHS, together with the monodromy
action, of the so-called Yomdin-L\^{e} surface singularities (YLS)~\cite{Yomdin74}.
Note that, following the
ideas of~\cite{Artal94}, the generalized Steenbrink's spectral sequence presented here can be used
to find two YLS having the same characteristic polynomials, the same abstract
topologies, but different embedded topologies (it is enough to take a Zariski
pairs in the tangent cones).

This paper is organized as follows. In~\S\ref{sec:preliminaries}, some well-known
preliminaries about weighted blow-ups and embedded $\Q$-resolutions are presented.
The main result, namely Theorem~\ref{SE_reduction} is proven
in~\S\ref{sec:sem_reduction}.
After recalling the monodromy filtration in~\S\ref{sec:mon_filtration},
the generalized Steenbrink's spectral sequence converging to $H^q(F,\C)$
is described in~\S\ref{sec:spectral_sequence}. Finally, as an application,
the use of all the results of this work are illustrated in~\S\ref{sec:examples}
with several examples including a plane curve and a YLS.
In particular, we provide infinite pairs of irreducible YLS having the same
complex monodromy with different topological type.

\vspace{0.25cm}

\noindent \textbf{Acknowledgments.} This is part of my PhD thesis. I am deeply grateful to my advisors Enrique Artal and José Ignacio Cogolludo for supporting me continuously with their fruitful conversations and ideas.

\section{Preliminaries}\label{sec:preliminaries}

Let us sketch some definitions and properties about $V$-manifolds, weighted projective spaces, and weighted blow-ups, see \cite{AMO11b, Martin11PhD} for a more detailed exposition.

\begin{defi}
Let $H=\{f=0\}\subset \C^{n+1}$. An {\em embedded $\Q$-resolution} of $(H,0) \subset (\C^{n+1},0)$ is a proper analytic map $\pi: X \to (\C^{n+1},0)$ such that:
\begin{enumerate}
\item $X$ is a $V$-manifold with abelian quotient singularities.
\item $\pi$ is an isomorphism over $X\setminus \pi^{-1}(\Sing(H))$.
\item \label{cond3} $\pi^{*}(H)$ is a hypersurface with $\mathbb{Q}$-normal crossings on $X$.
\end{enumerate}
\end{defi}

To deal with these resolutions, some notation needs to be introduced. Let $G := \mu_{d_0} \times \cdots \times \mu_{d_r}$ be an arbitrary finite abelian group written as a product of finite cyclic groups, that is, $\mu_{d_i}$ is the cyclic group of $d_i$-th roots of unity. Consider a matrix of weight vectors
$$
A := (a_{ij})_{i,j} = [{\bf a}_0 \, | \, \cdots \, | \, {\bf a}_n ] \in Mat ((r+1) \times (n+1), \Z)
$$
and the action
\begin{equation*}
\begin{array}{c}
( \mu_{d_0} \times \cdots \times \mu_{d_r} ) \times \C^{n+1}  \longrightarrow  \C^{n+1}, \\[0.15cm]
\big( \bxi_{\bd} , {\bf x} \big)  \mapsto  (\xi_{d_0}^{a_{00}} \cdots \xi_{d_r}^{a_{r0}}\, x_0,\, \ldots\, , \xi_{d_0}^{a_{0n}} \cdots \xi_{d_r}^{a_{rn}}\, x_n ).
\end{array}
\end{equation*}
The set of all orbits $\C^{n+1} / G$ is called ({\em cyclic}) {\em quotient space of type $({\bf d};A)$} and it is denoted by
$$
  X({\bf d}; A) := X \left( \begin{array}{c|ccc} d_0 & a_{00} & \cdots & a_{0n}\\ \vdots & \vdots & \ddots & \vdots \\ d_r & a_{r0} & \cdots & a_{rn} \end{array} \right).
$$

The orbit of an element $(x_0,\ldots,x_n)$ under this action is denoted by $[(x_0,\ldots,x_n)]$. Condition~\ref{cond3} of the previous definition means the total transform $\pi^{-1}(H) = (f\circ \pi)^{-1}(0)$ is locally given by a function of the form $x_0^{m_0} \cdots x_k^{m_k} : X({\bf d};A) \rightarrow \C$, see~\cite{Steenbrink77}. The previous numbers $m_{i}$'s have no intrinsic meaning unless $\mu_{{\bf d}}$ induces a small action on $GL(n+1,\C)$. This motivates the following.

\begin{defi}\label{def_normalized_XdA_intro}
The type $({\bf d}; A)$ is said to be {\em normalized} if the action is free on $(\C^{*})^{n+1}$ and $\mu_{\bf d}$ is identified with a small subgroup of $GL(n+1,\C)$.
\end{defi}

As a tool for finding embedded $\Q$-resolutions one uses weighted blow-ups with smooth center. Special attention is paid to the case of dimension 2 and 3 and blow-ups at points. 

\begin{ex}\label{blowup_dim2}
Assume $(d;a,b)$ is normalized and $\gcd (\w) =1$, $\w := (p,q)$. Then, the total space of the $\w$-blow-up at the origin of $X(d;a,b)$,
\begin{equation}\label{w-blow-up_intro}
\pi_{(d;a,b),\w}: \widehat{X(d;a,b)}_{\w} \longrightarrow X(d;a,b),
\end{equation}
can be written as
$$
\widehat{U}_1 \cup \widehat{U}_2 = X \left( \frac{pd}{e}; 1, \frac{-q+\beta p b}{e} \right) \cup X \left( \frac{qd}{e}; \frac{-p+\mu qa}{e}, 1 \right)
$$
and the charts are given by
\begin{equation*}
\begin{array}{c|c}
\text{First chart} & X \left( \displaystyle\frac{pd}{e}; 1, \frac{-q+\beta p b}{e} \right)  \ \longrightarrow \ \widehat{U}_1, \\[0.5cm] & \,\big[ (x^e,y) \big] \mapsto \big[ ((x^p,x^q y),[1:y]_{\w}) \big]_{(d;a,b)}. \\ \multicolumn{2}{c}{} \\
\text{Second chart} & X \left( \displaystyle\frac{qd}{e}; \frac{-p+\mu qa}{e}, 1 \right) \ \longrightarrow \ \widehat{U}_2, \\[0.5cm] & \hspace{0.15cm} \big[ (x,y^e) \big] \mapsto \big[ ((x y^p, y^q),[x:1]_{\w}) \big]_{(d;a,b)}.
\end{array}
\end{equation*}
Above, $e=\gcd(d,pb-qa)$ and $\beta a \equiv \mu b \equiv 1$ $(\text{mod $d$})$. Observe that the origins of the two charts are cyclic quotient singularities; they are located at the exceptional divisor $E$ which is isomorphic to $\P^1_{\w} \cong \P^1$.
\end{ex}

\begin{ex}\label{blowup_dim3_smooth}
Let $\pi_{\w}: \widehat{\C}^3_{\w} \to \C^3$ be the $\w$-weighted blow-up at the origin with $\w=(p,q,r)$, $\gcd(\w)=1$. The new space is covered by three open sets 
$$
\widehat{\C}^3_{\w} = U_1 \cup U_2 \cup U_3 = X(p;-1,q,r) \cup X(q;p,-1,r) \cup X(r;p,q,-1),
$$
and the charts are given by
\begin{equation}\label{charts_dim3}
\begin{array}{cc}
X(p;-1,q,r) \longrightarrow U_1: & [(x,y,z)] \mapsto ((x^p, x^q y, x^r z),[1:y:z]_{\w}), \\[0.25cm]
X(q;p,-1,r) \longrightarrow U_2: & [(x,y,z)] \mapsto ((x y^p,y^q,y^r z),[x:1:z]_{\w}), \\[0.25cm]
X(r;p,q,-1) \longrightarrow U_3: & [(x,y,z)] \mapsto ((x z^p, y z^q, z^r),[x:y:1]_{\w}).
\end{array}
\end{equation}

In general $\widehat{\C}^3_{\w}$ has three lines of (cyclic quotient) singular points located at the three axes of the exceptional divisor $\pi^{-1}_{\w}(0) \simeq \P^2_{\w}$. For instance, a generic point in $x=0$ is a cyclic point of type $\C\times X(\gcd(q,r);p,-1)$.
Note that although the quotient spaces are represented by normalized types, the exceptional divisor can still be simplified:
\begin{equation}\label{propPw}
\begin{array}{rcl}
\P^2(p,q,r) & \longrightarrow & \P^2 \displaystyle\left(\frac{p}{(p,r)\cdot
(p,q)},\frac{q}{(q,p)\cdot (q,r)},
\frac{r}{(r,p)\cdot (r,q)}\right),\\[0.5cm]
\displaystyle \,[x:y:z] & \mapsto & [x^{\gcd(q,r)}:y^{\gcd(p,r)}:z^{\gcd(p,q)}].
\end{array} 
\end{equation}

However, this simplification may be not useful when working with the whole ambient space because its charts are not compatible with $\widehat{\C}^3_{\w}$. Thus the natural covering of the exceptional divisor is
$$
  \P^2_{\w} = V_1 \cup V_2 \cup V_3 = X(p;q,r) \cup X(q;p,r) \cup X(r;p,q),
$$
and the charts are given by the restrictions of the maps in~\eqref{charts_dim3} to $x=0$, $y=0$, and $z=0$ respectively.
\end{ex}

\begin{ex}\label{blowup_dim3_singular}
Assume $(d;a,b,c)$ is normalized and $\gcd (\w) =1$, $\w := (p,q,r)$. Then, the total space of the $\w$-blow-up at the origin of $X(d;a,b,c)$,
$$
\pi = \pi_{(d;a,b,c),\w}:\, \widehat{X(d;a,b,c)}_{\w} \longrightarrow X(d;a,b,c)
$$
can be covered by three open sets as
$$
\widehat{X(d;a,b,c)}_{\w} = \frac{\widehat{\C}^3_{\w}}{\mu_d} = \frac{U_1 \cup U_2 \cup U_3}{\mu_d} = \widehat{U}_1 \cup \widehat{U}_2 \cup \widehat{U}_3,
$$
where

$$
\begin{array}{ccc}
\displaystyle \widehat{U}_1 = \frac{U_1}{\mu_d} = \frac{X(p;-1,q,r)}{\mu_d} =
X \left(\begin{array}{c|ccc} p & -1 & q & r \\ pd & a & pb-qa & pc-ra \end{array}\right), \\[0.75cm]
\displaystyle \widehat{U}_2 = \frac{U_2}{\mu_d} = \frac{X(q;p,-1,r)}{\mu_d} =
X \left(\begin{array}{c|ccc} q & p & -1 & r \\ qd & qa-pb & b & qc-rb \end{array}\right), \\[0.75cm]
\displaystyle \widehat{U}_3 = \frac{U_3}{\mu_d} = \frac{X(r;p,q,-1)}{\mu_d} =
X \left(\begin{array}{c|ccc} r & p & q & -1 \\ rd & ra-pc & rb-qc & c \end{array}\right). 
\end{array}
$$

The charts are given by the induced maps on the corresponding quotient spaces, see Equation~(\ref{charts_dim3}). The exceptional divisor $E = \pi^{-1}_{(d;a,b,c),\w}(0)$ is identified with the quotient
$$
\P^2_{\w}(d;a,b,c) := \frac{\P^2_{\w}}{\mu_d}.
$$
There are three lines of quotient singular points in $E$ and outside $E$ the map $\pi_{(d;a,b,c),\w}$ is an isomorphism.


The expression of the quotient spaces can be modified as follows. Let $\alpha$ and $\beta$ be two integers such that $\alpha d + \beta a = \gcd(d,a)$, then one has that the space $X\left(\begin{smallmatrix} p ; & -1 & q & r \\ pd ; & a & pb-qa & pc-ar \end{smallmatrix} \right)$ equals
$$
X\left(\begin{array}{c|ccc}
pd & (d,a) & -q (d,a) + \beta pb & -r (d,a) + \beta pc \\
(d,a) & 0 & b & c
\end{array}\right). 
$$
Note that in general the previous space is not represented by a normalized type. To obtain its normalized one, follow the processes described in (I.1.3) and (I.1.9) of \cite{Martin11PhD}.
\end{ex}

\section{The Semistable Reduction}\label{sec:sem_reduction}

This tool was introduced by Mumford in \cite[pp. 53-108]{Mumford73} and roughly speaking the mission of the semistable reduction is to get a reduced divisor that provides a model of the Milnor fibration. The spectral sequence converging to the cohomology of the Milnor fiber will be defined in terms of this reduced divisor, see Section \ref{sec:spectral_sequence}. Here we present a more general approach than the needed for the Milnor fibration.

\begin{notation}\label{SE_normal_notation}
Let $X$ be a complex analytic variety and let $g:X \to D_\eta^2$ be a non-constant analytic function. Assume $X$ only has abelian quotient singularities and $g^{-1}(0)$ is a $\mathbb{Q}$-normal crossing divisor, that is, $g$ is locally given by a function of the form $x_0^{m_0} \cdots x_k^{m_k}: X(\bd;A) \rightarrow \C$. Let~$e$ be the least common multiple of all possible multiplicities appearing in the divisor $g^{-1}(0)$ and consider $\sigma: D^2_{\eta^{1/e}} \to D^2_{\eta}$ the branched covering defined by $\sigma(t) = t^e$.

Denote by $(X_1, g_1, \sigma_1)$ the pull-back of $g$ and $\sigma$.
$$
\xymatrix{
X_1 \ar[r]^{g_1} \ar[d]_{\sigma_1} & D^2_{\eta^{1/e}} \ar[d]^{\sigma}\\
X \ar[r]_{g} & D^2_{\eta}
}
$$

The map $\sigma_1$ is a cyclic covering of $e$ sheets ramified over $g^{-1}(0)$. If $F$ denotes the Milnor fiber of $g:X \to \C$, then $\sigma_1^{-1}(F)$ has $e$ connected components which are projected diffeomorphically onto $F$.

We have not yet completed the construction of the semistable reduction because~$X_1$ is not normal. Indeed, given $P\in g^{-1}(0)$ there exist integers $k \geq 0$ and $m_0,\ldots,m_k \geq 1$ such that
$$
g(x_0,\ldots,x_n) = x_0^{m_0}\cdots x_k^{m_k}: B^{2n+2}/\mu_\bd \longrightarrow \C,
$$
where $B^{2n+2}$ is an open ball of $\C^{n+1}$ and the group $\mu_\bd$ acts diagonally as in $(\bd;A)$. Denote by $P_1$ the unique point in $\sigma_1^{-1}(P)$. Then, $X_1$ in a neighborhood of $P_1$ is of the form
\begin{equation}\label{expression_X1}
  \Big\{ \big( \big[(x_0,\ldots,x_n)\big], t \big) \in X(\bd;A) \times \C \quad \big| \quad t^e = x_0^{m_0} \cdots x_k^{m_k} \Big\},
\end{equation}
and hence the space $X_1$ is not necessarily normal.

Let $\nu: \widetilde{X} \to X_1$ be the normalization and denote by $\widetilde{g} := g_1\circ \nu$ and $\varrho := \sigma_1 \circ \nu$ the natural maps. The normalization process has essentially two steps when the corresponding ring is a unique factorization domain (UFD). First, separate the irreducible components, and then find the normalization of each component. In the latter case, the ring in question is a domain and the following result applies.
\end{notation}

\begin{lemma}\label{SE_normal_domain}
Let $A \subset B$ be an integral extension of commutative rings. Suppose that $B$ is an integrally closed domain such that $Q(B)|Q(A)$ is a Galois extension. Then, the normalization of the ring~$A$ is $\overline{A} = B^{\Gal(Q(B)|Q(A))}$.
\end{lemma}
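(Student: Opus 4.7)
\noindent\textbf{Proof plan for Lemma \ref{SE_normal_domain}.} The plan is to prove the two inclusions $\overline{A}\subset B^{G}$ and $B^{G}\subset \overline{A}$ separately, where $G:=\Gal(Q(B)|Q(A))$. Since $B$ is a domain and $A\subset B$, the ring $A$ is also a domain, so $Q(A)$ makes sense and embeds into $Q(B)$; moreover the integrality of $A\subset B$ forces every element of $B$ to be algebraic over $Q(A)$, so $Q(A)\subset Q(B)$ is an algebraic (hence Galois, by hypothesis) extension whose fixed field equals $Q(A)$.

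For $\overline{A}\subset B^{G}$, I would take $x\in\overline{A}$. By definition $x\in Q(A)$ and $x$ is integral over $A$. Because $A\subset B$, the element $x$ is also integral over $B$; using that $B$ is integrally closed in $Q(B)$, I conclude $x\in B$. Since $x\in Q(A)$ and $Q(A)$ is the fixed field of $G$, this gives $x\in B^{G}$.

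For the reverse inclusion $B^{G}\subset\overline{A}$, I would pick $x\in B^{G}$. Then $x\in B$ is integral over $A$ (the extension $A\subset B$ is integral by assumption), so $x$ is a root of a monic polynomial with coefficients in $A$. On the other hand $x\in Q(B)^{G}=Q(A)$ by Galois theory. Combining these two facts, $x$ lies in $Q(A)$ and is integral over $A$, hence $x\in\overline{A}$.

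Both directions are short and essentially formal consequences of Galois theory together with the integral closedness of $B$; the only point that needs a little care is the verification that $Q(B)|Q(A)$ is indeed algebraic (so that Galois theory applies and the fixed field of $G$ is truly $Q(A)$), which I expect to be the most delicate bookkeeping step but still immediate from the integrality of $A\subset B$. No deeper machinery appears necessary.
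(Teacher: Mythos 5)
Your proof is correct and follows essentially the same route as the paper: the two inclusions you verify amount to the identity $\overline{A}=B\cap Q(A)$ (using that $B$ is integrally closed and that $A\subset B$ is integral), and then the Galois hypothesis identifies $B\cap Q(A)$ with $B^{\Gal(Q(B)|Q(A))}$ since $Q(A)$ is the fixed field. The paper's proof is just a condensed version of exactly this argument, so no changes are needed.
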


\begin{proof}
Since $B$ is normal and the extension $A\subset B$ is integral, then $\overline{A} = B \cap Q(A)$. Now the statement follows from the Galois condition.
\end{proof}

\begin{ex}\label{XA_not_UFD}
The algebraic ring of functions of $X(2;1,1)$ is isomorphic to $\C[x^2, xy, y^2]$ as an algebraic variety. In this ring the polynomial $xy$ is irreducible but not prime. To compute the normalization of the quotient ring $\C[x^2,xy,y^2]/\langle xy \rangle$, one can not proceed in the same way as in a UFD. This happens because $\mu_2$ does not define an action on the factors of the polynomial $xy$.
\end{ex}

Although the ring of functions of the previous space (\ref{expression_X1}) is not a UFD, see Example~\ref{XA_not_UFD} above, to compute the normalization of $X_1$ one can proceed in the same spirit because of the special form of the polynomial $t^e - x_0^{m_0} \cdots x_k^{m_k}$, see proof of Theorem~\ref{SE_reduction}. Before that we need to introduce some notations.

\begin{defi}\label{def_mult_2}
Let $X$ be a complex analytic space having only abelian quotient singularities and consider $E$ a $\mathbb{Q}$-normal crossing divisor on $X$. Assume $P\in |E|$ is a point such that the local equation of $E$ at $P$ is given by the function
$$
x_0^{m_0} \cdots x_k^{m_k} :\, X(\bd; A) := \C^{n+1}/\mu_\bd \longrightarrow \C, \quad (0 \leq k \leq n)
$$
where $x_0, \ldots, x_n$ are local coordinates of $X$ at $P$, $\bd=(d_0,\ldots,d_r)$, and $A = (a_{ij})_{i,j} \in \Mat((r+1)\times (n+1),\Z)$.

The {\em multiplicity} of $E$ at~$P$, denoted by $m(E,P)$ or simply $m(P)$ if the divisor is clear from de context, is defined by
$$
m(E,P):= \gcd \bigg( m_0, \ldots, m_k, \frac{\sum_{j=0}^k a_{0j} m_j}{d_0}, \ldots, \frac{\sum_{j=0}^k a_{rj} m_j}{d_r} \bigg).
$$

If there exists $T\subset |E|$ such that the function $P\in T \mapsto m(E,P)$ is constant, then we use the notation $m(T) := m(E,P_0)$, where $P_0$ is an arbitrary point in $T$.
\end{defi}

\begin{remark}\label{def1_equals_def2}
Using the general fact $\lcm (\frac{m}{b_0}, \ldots, \frac{m}{b_r}) = \frac{m}{\gcd(b_0,\ldots,b_r)}$, one easily checks that this definition coincides with the one of~\cite[Def.~2.6]{Martin11} for $k=0$, cf.~\eqref{mp_calculation}, that is,
$$
m(E,P):= \frac{m}{L}, \qquad L = \lcm \left( \frac{d_0}{\gcd(d_0,a_{00})},\ldots,\frac{d_r}{\gcd(d_r,a_{r0})} \right),
$$
where $E$ is a $\mathbb{Q}$-divisor on $X$ locally given at the point $P$ by the well-defined function $x_0^m: X(\bd;A) \to \C$.
\end{remark}

In the situation of~\ref{SE_normal_notation}, the multiplicity $m(g^{*}(0),P)$ with $P\in g^{-1}(0)$ can be interpreted geometrically as follows.

\begin{lemma}\label{prime_factors}
The number of prime (or irreducible) factors of the polynomial $t^e - x_{0}^{m_0} \cdots x_k^{m_k}$ regarded as an element in $\C[x_0,\ldots,x_n]^{\mu_\bd} \otimes_{\C} \C[t]$ is $m(g^{*}(0),P)$. Hence this number also coincides with the cardinality of the fiber over $P$ of the covering $\varrho: \widetilde{X} \to X$.
\end{lemma}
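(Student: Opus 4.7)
I would split the statement into two parts: counting the irreducible factors in $\C[x_0,\ldots,x_n]^{\mu_\bd}[t]$, and identifying this count with $|\varrho^{-1}(P)|$. The first step is to factor $P(t) := t^e - x_0^{m_0}\cdots x_k^{m_k}$ in the UFD $\C[x_0,\ldots,x_n][t]$. Set $u := x_0^{m_0}\cdots x_k^{m_k}$ and $\delta := \gcd(m_0,\ldots,m_k)$, which coincides with $\gcd(e,m_0,\ldots,m_k)$ because each $m_i$ divides $e$ by definition of $e$. Writing $m_j = \delta m_j'$ and $e = \delta e'$, one has $u = w^{\delta}$ with $w := \prod_{j=0}^k x_j^{m_j'}$ and $\gcd(m_0',\ldots,m_k') = 1$, so
$$
P(t) \;=\; t^{\delta e'} - w^{\delta} \;=\; \prod_{\zeta^\delta = 1}\bigl(t^{e'} - \zeta\, w\bigr),
$$
each binomial factor being irreducible in $\C[x_0,\ldots,x_n][t]$ by the standard criterion that the exponents $(e', m_0',\ldots,m_k')$ are coprime.

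The second step is to analyse the action of $\mu_\bd$ on these $\delta$ factors. Invariance of $u$ forces the character $\chi\colon \mu_\bd \to \C^{*}$ defined by $\bxi\mapsto\prod_i \xi_{d_i}^{\sum_j a_{ij}m_j'}$ to satisfy $\chi^{\delta} = 1$, and the action sends $t^{e'}-\zeta w$ to $t^{e'}-\zeta\,\chi(\bxi)\,w$. Hence the stabiliser of each factor is $\ke(\chi)$, every orbit has size $L':=|\mathrm{Im}(\chi)| = \lcm_i\bigl(d_i/\gcd(d_i,\sum_j a_{ij}m_j')\bigr)$, and there are exactly $\delta/L'$ orbits. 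To match $\delta/L'$ with $m(g^{*}(0),P)$, I set $N_i := (\sum_j a_{ij}m_j)/d_i$; a short elementary manipulation using $\gcd(m_0',\ldots,m_k')=1$ gives
$$
\frac{d_i}{\gcd\bigl(d_i,\sum_j a_{ij}m_j'\bigr)} \;=\; \frac{\delta}{\gcd(\delta,N_i)},
$$
so the identity $\lcm(\delta/b_0,\ldots,\delta/b_r) = \delta/\gcd(b_0,\ldots,b_r)$ already invoked in Remark~\ref{def1_equals_def2} yields $\delta/L' = \gcd(\delta,N_0,\ldots,N_r) = m(g^{*}(0),P)$ by Definition~\ref{def_mult_2}.

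The third step transfers the factor count to the invariant ring and then to the geometry. The product over each $\mu_\bd$-orbit lies in $\C[x_0,\ldots,x_n]^{\mu_\bd}[t]$ and is irreducible there by the Galois argument underlying Lemma~\ref{SE_normal_domain}: any factorisation in the fixed ring lifts to a $\mu_\bd$-stable factorisation in the UFD, and transitivity of $\mu_\bd$ on the orbit forces one factor to be a unit. This establishes the first assertion. For the second, $\sigma_1^{-1}(P)=\{P_1\}$ gives $\varrho^{-1}(P) = \nu^{-1}(P_1)$, and normalisation has one point over $P_1$ for each analytic irreducible component of $X_1$ at $P_1$. The step I expect to require most care is ruling out extra analytic splitting: each orbit product has the explicit form $t^{e'L'} - \zeta'' w^{L'}$ and in $\C\{x_0,\ldots,x_n,t\}$ breaks only into the product of the $L'$ binomials $t^{e'} - \zeta\eta w$, each of which remains irreducible analytically because the exponents $(e',m_0',\ldots,m_k')$ are coprime; descending by the same Galois argument to the invariant analytic local ring matches the polynomial count and yields $|\varrho^{-1}(P)| = m(g^{*}(0),P)$.
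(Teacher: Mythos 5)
Your proposal is correct and follows essentially the same route as the paper's proof: the identical factorization of $t^{e}-x_0^{m_0}\cdots x_k^{m_k}$ into $\gcd(m_0,\ldots,m_k)$ binomials, the same analysis of how $\mu_{\mathbf d}$ permutes them through a character (your $\mathrm{Im}(\chi)$ is exactly the paper's group $H=H_0\cdots H_r$), and the same $\gcd$--$\lcm$ identity showing the number of orbits is $m(g^{*}(0),P)$. The extra details you supply (irreducibility of each binomial factor, descent of irreducibility to the invariant ring via the orbit argument, and the identification of the fiber of $\varrho$ with the analytic branches, which the paper leaves implicit and essentially handles inside the proof of Theorem~\ref{SE_reduction}) are all correct.
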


\begin{proof}
Let us denote $\l = \gcd(m_0,\ldots,m_k)$ and $C_i = \sum_{j=0}^k a_{ij} m_j$ for $i=0,\ldots,r$. The polynomial $t^e - x_0^{m_0} \cdots x_k^{m_k} \in \C[x_0, \ldots, x_n, t]$ factorizes into $\l$ different components as
$$
t^e - x_0^{m_0} \cdots x_k^{m_k} = \prod_{i=0}^{\l-1} \Big( t^{\frac{e}{\l}} - \zeta^i_\l\, x_0^{\frac{m_0}{\l}} \cdots x_k^{\frac{m_k}{\l}} \Big),
$$
where $\zeta_\l$ is a primitive $\l$-th root of unity. However, this factors are not invariant under the group~$\mu_\bd$, since they are mapped to
$$
t^{\frac{e}{\l}} - \zeta^i_\l\, x_0^{\frac{m_0}{\l}} \cdots x_k^{\frac{m_k}{\l}} \quad \longmapsto \quad t^{\frac{e}{\l}} - \xi_{d_0}^{\frac{C_0}{\l}} \cdots \xi_{d_r}^{\frac{C_r}{\l}} \cdot \zeta^i_\l\, x_0^{\frac{m_0}{\l}} \cdots x_k^{\frac{m_k}{\l}},
$$
by the action of $(\xi_{d_0}, \ldots, \xi_{d_r}) \in \mu_\bd$. Recall that $\C^{n+1}/\mu_\bd = X(\bd; A)$.

Let $H_i$ be the cyclic group defined by $H_i:= \{ \xi_{d_i}^{C_i/\l} \mid \xi_{d_i} \in \mu_{d_i} \}$, for $i=0,\ldots,r$, and consider $H = H_0 \cdots H_r$. Since $t^e - x_0^{m_0} \cdots x_k^{m_k}$ defines a function over $X(\bd; A)\times \C$, then $d_i$ must divide $C_i$ and, consequently, all the previous groups are (normal) subgroups of $\mu_{\l}$. The order of~$\mu_{\l} / H$ is exactly the number of prime (or irreducible) components of the preceding polynomial regarded as an element in $\C[x_0,\ldots,x_n]^{\mu_\bd} \otimes_{\C} \C[t]$.

The order of $H_i$ is $|H_i| = \frac{d_i}{\gcd ( d_i,\, C_i/\l )} = \frac{\l}{\gcd ( \l,\, C_i/d_i )}$. Then, one has
\begin{equation}\label{mp_calculation}
\begin{split}
|H| & = | H_0 \cdots H_r | = \lcm \big( |H_0|, \ldots, |H_r| \big) \\ & = \frac{\l}{ \gcd \left( \l, \frac{C_0}{d_0}, \ldots, \frac{C_r}{d_r} \right) } = \frac{\l}{m(P)}.
\end{split}
\end{equation}
In the expression above, a general property about greatest common divisor and least common multiple already mentioned in~\ref{def1_equals_def2} was used.
\end{proof}

Assume that $g^{-1}(0) = E_0 \cup \cdots \cup E_s$ and let us denote $D_i = \varrho^{-1}(E_i)$ for $i=0,\ldots,s$ and $D = \bigcup_{i=0}^s D_i$. This commutative diagram illustrates the whole process of the semistable reduction.
\begin{equation}\label{SE_normal_diagram}
\xymatrix{
D_i \ar@{^{(}->}[r] \ar[d]_{\varrho} & \widetilde{X} \ar[r]^{\nu} \ar[d]_{\varrho} \ar@/^0.65cm/[rr]^{\widetilde{g}} & X_1 \ar[r]^{g_1} \ar[d]_{\sigma_1} & D^2_{\eta^{1/e}} \ar[d]^{\sigma}\\
E_i \ar@{^{(}->}[r] & X \ar@{=}[r] & X \ar[r]_{g} & D^2_{\eta}
}
\end{equation}

Consider the stratification of $X$ associated with the normal crossing divisor $g^{-1}(0) \subset X$. That is, given a possibly empty set $I\subseteq \{0,1,\ldots,s\}$, consider
$$
  E_{I}^\circ := \Big( \cap_{i \in I} E_i \Big) \setminus \Big( \cup_{i\notin I} E_i \Big).
$$
Also, let $X = \bigsqcup_{j\in J} Q_j$ be a finite stratification of $X$ given by its quotient singularities so that the local equation of $g$ at $P \in E_I^{\circ} \cap Q_j$ is of the form
$$
  x_1^{m_1} \cdots x_k^{m_k}:\, B/G \longrightarrow \C,
$$
where $B$ is an open ball around $P$, and $G$ is an abelian group acting diagonally as in $(\bd;A)$. The multiplicities $m_i$'s and the action $G$ are the same along each stratum $E_I^{\circ} \cap Q_j$, i.e.~they do not depend on the chosen point $P \in E_I^{\circ} \cap Q_j$. Denote $m_{I,j} := m(E_I^{\circ} \cap Q_j)$. Finally, assume that $E^\circ_I \cap Q_j$ is connected.


\begin{theo}\label{SE_reduction}
The variety $\widetilde{X}$ only has abelian quotient singularities located at $\widetilde{g}^{-1}(0) = D$ which is a reduced divisor with normal crossings on~$\widetilde{X}$.
Also, $\varrho: \widetilde{X} \to X$ is a cyclic branched covering of $e$ sheets unramified over~$X\setminus g^{-1}(0)$.
Moreover, for $\emptyset \neq I \subseteq S:= \{0,1,\ldots,s\}$ and $j \in J$, the following properties hold. \vspace{0.1cm}
\begin{enumerate}\setlength{\itemsep}{5pt}
\item The restriction $\varrho\,|: \varrho^{-1}(\overline{E^\circ_I\cap Q_j}) \rightarrow \overline{E_I^\circ \cap Q_j}$ is a cyclic branched covering of $m_{I,j}$ sheets unramified over $E^\circ_I \cap Q_j$.
\item The variety $\varrho^{-1}(\overline{E^\circ_I\cap Q_j})$ is a $V$-manifold with abelian quotient singularities with $\gcd( \{ m(P) \mid P\in \overline{E^\circ_I\cap Q_j} \} )$ connected components.
\item Let $\varphi: \widetilde{X} \to \widetilde{X}$ be the canonical generator of the monodromy of the covering~$\varrho$. Then, its restriction to $\varrho^{-1}(\overline{E^\circ_I\cap Q_j})$ is a generator of the monodromy of $\varrho\,|: \varrho^{-1}(\overline{E^\circ_I\cap Q_j}) \rightarrow \overline{E_I^\circ \cap Q_j}$.
\item The Euler characteristic of each connected component of $D_i$ is
$$
  \qquad \displaystyle \sum_{\begin{subarray}{c} \{ i \} \subset I\subset \{ 0,1,\ldots,s \} \\ j \, \in \, J \end{subarray}} m_{I,j} \cdot \chi(E_I^\circ \cap Q_j) \bigg/ \gcd( \{ m(P) \mid P\in E_i \} ).
$$
\end{enumerate}
\end{theo}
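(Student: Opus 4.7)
The plan is to work locally at a point $P \in g^{-1}(0)$ and identify $\widetilde{X}$ explicitly as a $V$-manifold by normalizing the equation $t^e - x_0^{m_0}\cdots x_k^{m_k}$ defining $X_1$ inside $X(\bd;A)\times\C$. Setting $\l = \gcd(m_0,\ldots,m_k)$, the computation inside the proof of Lemma~\ref{prime_factors} already factors this polynomial into $m(P)$ irreducible pieces in the invariant ring $\C[x_0,\ldots,x_n]^{\mu_\bd}\otimes_\C \C[t]$, and these are the local irreducible components of $X_1$ at $P$; consequently $\nu^{-1}(P)$ has cardinality $m(P)$, giving the fiber-count needed for~(1). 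Moreover the $\mu_e$-monodromy acts on the $\l$ factors $t^{e/\l} - \zeta_\l^i\, x_0^{m_0/\l}\cdots x_k^{m_k/\l}$ of the non-invariant polynomial by the index shift $i\mapsto i-1$, which descends to a cyclic permutation of the $m(P)$ $\mu_\bd$-orbits and yields~(3).

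Next I would verify that each local component has a normalization that is a $V$-manifold with abelian quotient singularities and that $D$ is reduced with normal crossings on it. After choosing a representative factor, the relevant model is $Y = \{t^{e/\l} = \zeta\, x_0^{m_0/\l}\cdots x_k^{m_k/\l}\}\subset\C^{n+1}\times\C$, which is irreducible since $\gcd(m_0/\l,\ldots,m_k/\l) = 1$. Via a parametrization of the form $(y_0,\ldots,y_k,x_{k+1},\ldots,x_n)\mapsto(y_0^{e/\l},\ldots,y_k^{e/\l},x_{k+1},\ldots,x_n,y_0^{m_0/\l}\cdots y_k^{m_k/\l})$, $Y$ is exhibited as a quotient of $\C^{n+1}$ by an explicit diagonal cyclic action, so it is normal with reduced monomial total transform. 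The normalization of the corresponding $X_1$-component is then the quotient of $Y$ by the stabilizer subgroup of $\mu_\bd$ fixing the chosen factor (apply Lemma~\ref{SE_normal_domain}); composing the two quotients and normalizing the resulting diagonal abelian action via the procedures of~\cite{Martin11PhD} yields an explicit quotient chart $X(\bd';A')$ for $\widetilde{X}$ around a point of $\nu^{-1}(P)$, proving the first paragraph of the statement.

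For (1) and (3) one then applies the above analysis uniformly along the stratum $E_I^\circ\cap Q_j$, on which the multiplicity is constantly $m_{I,j}$. The subtler item is the connected-component count of $\varrho^{-1}(\overline{E_I^\circ\cap Q_j})$ in~(2): for this I would track how the $m(P)$ local components glue as $P$ degenerates from $E_I^\circ\cap Q_j$ onto a deeper stratum $E_{I'}^\circ\cap Q_{j'}$ with $I'\supseteq I$. Two local components over nearby generic points are identified in $\widetilde{X}$ precisely when the corresponding $\mu_\bd$-orbits of factors merge under the richer action at the deeper stratum, and a direct divisibility count shows that the number of surviving global components of $\varrho^{-1}(\overline{E_I^\circ\cap Q_j})$ is exactly $\gcd\{m(P):P\in\overline{E_I^\circ\cap Q_j}\}$.

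Finally, (4) follows from additivity of the Euler characteristic: over each stratum $E_I^\circ\cap Q_j$ with $I\ni i$, the map $\varrho\,|_{D_i}$ is an unramified $m_{I,j}$-sheeted cover contributing $m_{I,j}\cdot\chi(E_I^\circ\cap Q_j)$. Since the $\mu_e$-monodromy acts by cyclic shift on factors, it permutes the $\gcd\{m(P):P\in E_i\}$ connected components of $D_i$ transitively, so all of them have the same Euler characteristic and dividing the total by their number yields the stated formula. The main obstacle is the combinatorial gluing analysis in (2), i.e.\ giving a precise accounting of which local normalization branches get identified across strata of different multiplicities; once that is pinned down, everything else reduces to unwinding the local model and applying Lemma~\ref{prime_factors}.
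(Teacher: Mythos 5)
Your overall route---localize at $P$, use Lemma~\ref{prime_factors} to split off the $m(P)$ components, normalize one component explicitly, quotient by the stabilizer in $\mu_{\bd}$, and deduce (1)--(4) from cyclicity and Riemann--Hurwitz---is in the same spirit as the paper, which however normalizes the invariant ring in a single step: after reducing to $m(P)=1$ it embeds $A^{\mu_{\bd}}$ into $B=\C[y_0,\ldots,y_n]$ via $x_i=y_i^{e/m_i}$ ($i\le k$), $x_i=y_i$ ($i>k$), $t=y_0\cdots y_k$, and applies Lemma~\ref{SE_normal_domain} to get $\overline{A^{\mu_{\bd}}}=B^{\Gal}$, so that $D$ is visibly the image of the reduced divisor $\{y_0\cdots y_k=0\}$. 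Your variant substitution creates a genuine gap precisely at the central claim. First, the hypersurface $Y=\{t^{e/\ell}=\zeta\,x_0^{m_0/\ell}\cdots x_k^{m_k/\ell}\}$ is \emph{not} ``exhibited as a quotient of $\C^{n+1}$, so normal'': it is in general non-normal (e.g.\ $t^6=x_0^2x_1^3$ is singular along the codimension-one sets $\{t=x_0=0\}$ and $\{t=x_1=0\}$); what your parametrization produces is the \emph{normalization} $\C^{n+1}/G_0$ with $G_0=\ke\big((\mu_{e/\ell})^{k+1}\to\mu_{e/\ell}\big)$, an abelian but in general non-cyclic group. Second, and more importantly, ``reduced monomial total transform'' is not established: in your coordinates $\widetilde g$ pulls back to $y_0^{m_0/\ell}\cdots y_k^{m_k/\ell}$, which is non-reduced upstairs. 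Reducedness of $D$ on the quotient amounts to showing that the generic stabilizer of each $\{y_i=0\}$ in the \emph{full} group (including the lifted stabilizer of the chosen factor in $\mu_{\bd}$) acts on $y_i$ with order exactly $m_i/\ell$; this follows from the facts that any diagonal element fixing a generic point of $\{y_i=0\}$ acts trivially on the other coordinates and, by invariance of $t$, satisfies $\xi_i^{m_i/\ell}=1$, while $G_0$ already realizes $\mu_{m_i/\ell}$ --- but none of this appears in your write-up. The paper's choice $t=y_0\cdots y_k$ is designed exactly so that this issue evaporates (there the generic stabilizers along the $\{y_i=0\}$ are forced to be trivial).

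Two further points. You quotient $Y$ rather than its normalization by the stabilizer of the factor; you need either the lifting of that (diagonal) action to $\C^{n+1}$ together with the fact that normalization commutes with finite quotients, or a concrete integral extension with Galois fraction-field extension to which Lemma~\ref{SE_normal_domain} actually applies --- as written the lemma is invoked without specifying the rings. And item (2), which you yourself flag as the main obstacle, is left as an unproved ``divisibility count''; in the paper it is exactly here that cyclicity of $\varrho$ is used: the deck group $\langle\varphi\rangle\cong\Z/e$ acts transitively on the fibers, the stabilizer of a point over $P$ is $\langle\varphi^{m(P)}\rangle$ by Lemma~\ref{prime_factors}, and the component count over the connected set $\overline{E_I^\circ\cap Q_j}$ comes out as the stated gcd. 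So, as it stands, the first paragraph of the theorem ($V$-manifold charts and $D$ reduced with normal crossings) and item (2) are not yet proved; the skeleton is right, but these steps need to be filled in (or the substitution changed to the paper's), after which (1), (3), (4) do follow as you indicate.
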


\begin{proof}
First note that the morphism $\varrho: \widetilde{X} \to X$ is a cyclic branched covering unramified over $X\setminus g^{-1}(0)$, since so is $\sigma_1: X_1 \to X$ and the normalization $\nu: \widetilde{X} \to X_1$ does not change the normal points.

Let $P \in g^{-1}(0)$ and choose coordinates $x_0, \ldots, x_n$ as in~\ref{SE_normal_notation} so that $X_1 \subset X(\bd;A) \times \C$ is locally given by the polynomial $t^{e} - x_0^{m_0} \cdots x_k^{m_k}$. Let us denote for $i = 0, \ldots, k$,
$$
m(P) = m(g^{*}(0),P), \qquad e' = e/m(P), \qquad m'_i = m_i/m(P).
$$

Consider the ring
$$
  A = \frac{\C[x_0,\ldots,x_n, t]}{\langle t^{e} - x_0^{m_0} \cdots x_k^{m_k} \rangle}.
$$

The action given by $X(\bd;A)$ is extended to $A$ so that the variable $t$ is invariant. Then, by~Lemma~\ref{prime_factors}, the normalization $\overline{A^{\mu_\bd}}$ of the ring $A^{\mu_\bd}$ is isomorphic to the direct sum of $m(P)$ isomorphic copies of the normalization~of
$$
  \frac{\C[x_0,\ldots,x_n]^{{\mu_\bd}} \otimes_{\C} \C[t]}{ \big\langle t^{e'} - x_0^{m'_0} \cdots x_k^{m'_k} \big\rangle } =
  \Bigg( \frac{\C[x_0,\ldots,x_n,t]}{ \big\langle t^{e'} - x_0^{m'_0} \cdots x_k^{m'_k} \big\rangle } \Bigg)^{{\mu_\bd}}.
$$
Therefore to compute it we only need to consider the case $m(P) = 1$, for which the ring $A^{\mu_\bd}$ is an integral domain. Now we plan to apply Lemma~\ref{SE_normal_domain} to a ring extension $A^{\mu_\bd} \subset B$, where $B$ is a polynomial algebra.


Let $c_i = e/m_i$ for $i=0,\ldots,k$. Denote $B = \C[y_0,\ldots,y_n]$ and consider~$A^{\mu_\bd}$ as subring of $B$ by putting
$$
\begin{cases}
x_i = y_i^{c_i} & \text{if} \quad 0 \leq i \leq k, \\
x_i = y_i & \text{for} \quad i > k, \\
t = y_0 \cdots y_k
\end{cases}
$$

Note that $A$ can not be embedded in $B$ because it is not even a domain. Since $\mu_\bd$ acts diagonally on~$\C^{n+2}$, there exists $N \gg 0$ such that
$$
  y_0^{c_0 N}, \ldots, y_k^{c_k N},\ y_{k+1}^N, \ldots, y_n^N \ \in \ A^{\mu_\bd}.
$$

This implies that the extension $A^{\mu_\bd} \subset B$ is integral. Also, $B$ is a normal domain. It remains to prove that $Q(B)|Q(A^{\mu_\bd})$ is a Galois field extension. One has
$$
  \C(y_0^{c_0 N}, \ldots, y_k^{c_k N}, y_{k+1}^N, \ldots, y_n^N) \subset Q(A^{\mu_\bd}) \subset Q(B) = \C(y_0,\ldots,y_n).
$$
Note that the largest extension is clearly Galois. Its Galois group is abelian and it is isomorphic to
$$
\mu_{c_0 N} \times \cdots \times \mu_{c_k N} \times \mu_{N} \times \stackrel{n-k}{\ldots} \times \mu_{N}.
$$
Thus $\overline{A^{\mu_\bd}} = B^{\Gal(Q(B)|Q(A^{\mu_\bd}))}$.

This shows that $\Spec (\overline{A^{\mu_\bd}})$ and hence $\widetilde{X}$ are $V$-manifolds. Locally $D$ is the quotient under the group $\Gal(Q(B)|Q(A^{\mu_\bd}))$ of the reduced divisor $y_0 \cdots y_k = 0$. The rest of the statement follows from the fact that the branched coverings involved are cyclic. For the last part, use the classical Riemann-Hurwitz formula.
\end{proof}

\begin{remark}
Assume $\C[x_0,\ldots,x_n]^{\mu_\bd} = \C[\{ x_0^{\alpha_0} \cdots x_n^{\alpha_n} \}_{\alpha \in \Lambda}]$.
Then $A^{\mu_\bd}$ is identified with the subring
$$
\C[ \{ y_0^{\alpha_0 c_0} \cdots y_k^{\alpha_k c_k} \cdot y_{k+1}^{\alpha_{k+1}} \cdots y_n^{\alpha_n} \}_{\alpha \in \Lambda},\, y_0 \cdots y_k ] \subset \C[y_0,\ldots,y_n].
$$
Hence the Galois extension
$$
\Gal(Q(B)|Q(A^{\mu_\bd})) \subset \mu_{c_0 N} \times \cdots \times \mu_{c_k N} \times \mu_{N} \times \stackrel{(n-k)}{\ldots} \times \mu_{N}
$$
is given by the elements $(\xi_0, \ldots, \xi_k, \eta_{k+1}, \ldots, \eta_n)$ such that
$$
  \forall \alpha \in \Lambda, \quad \left\{ \begin{array}{l} \xi_0^{\alpha_0 c_0} \cdots \xi_k^{\alpha_k c_k} \cdot \eta_{k+1}^{\alpha_{k+1}} \cdots \eta_n^{\alpha_n} = 1,\\ \xi_0 \cdots \xi_k = 1. \end{array}\right.
$$

In general, this group is not a small subgroup of $GL(n+1,\C)$, that is, there may exist elements of the group having $1$ as an eigenvalue of multiplicity precisely $n$.
\end{remark}

\begin{remark}
Note that $\varrho\,|:\varrho^{-1} ( \overline{E^\circ_i \cap Q_j} ) \rightarrow \overline{E^\circ_i \cap Q_j}$ is an isomorphism when $I = \{ i \}$ and the multiplicity of $E_{i}$ (at the smooth points) is equal to one.
\end{remark}

In what follows this construction is applied to $g = f \circ \pi$, where the
map~$f:(M,0) \to (\C,0)$ is the germ of a non-constant analytic function and
$\pi: X \to (M,0)$ is an embedded $\Q$-resolution of $\{ f=0 \} \subset (M,0)$
with $M = X(\bd;A)$, cf.~Section~\ref{sec:spectral_sequence}.
Let us see an example.


\begin{ex}
Consider the plane curve defined by $f = x^p + y^q$ in $\C^2$. Recall that after the $(q_1,p_1)$-weighted blow-up at the origin, one obtains an embedded $\Q$-resolution with only one exceptional divisor $\E$ of multiplicity $\lcm(p,q)$, where $p=p_1 \gcd(p,q)$ and $q=q_1 \gcd(p,q)$, see e.g.~\cite[Ex.~3.3]{Martin11}.

Following Theorem~\ref{SE_reduction}, $D = \varrho^{-1}(\E)$ is irreducible and the restriction $\varrho: D \to \E$ is a branched covering of $\lcm(p,q)$ sheets. Also, the singular point of type $(q_1; -1, p_1)$ (resp.~$(p_1; q_1, -1)$) is converted into $p$ (resp.~$q$) smooth points in the semistable reduction. Finally, $\varrho\,|: \varrho^{-1}({\bf \widehat{C}}) \to {\bf \widehat{C}}$ is an isomorphism. This implies that the Euler characteristic of $D$ is
$$
\chi(D) = p + q + \gcd(p,q) - pq = \gcd(p,q) + 1 - \mu.
$$

\begin{figure}[ht]
\centering
\includegraphics{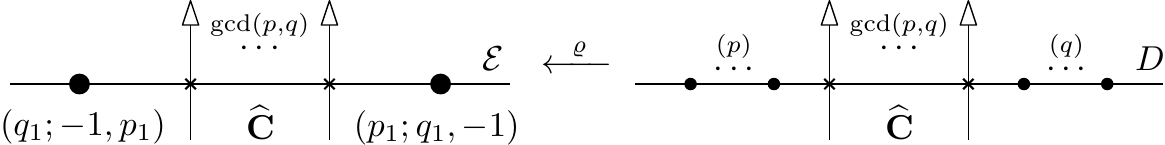}
\caption{Semistable reduction of $x^p + y^q$.}
\label{ex_210}
\end{figure}


The $p$ points in $D$ which are lift over the point of type~$(q_1;-1,p_1)$ are smooth. Of course, the same happens for the point of type $(p_1;q_1,-1)$. Also, the intersection of the strict transform with $D$ gives rise to $\gcd(p,q)$ smooth points. As we shall see the smoothness is not relevant for providing a MHS on the cohomology of the Milnor fiber.
\end{ex}

\section{Monodromy Filtration}\label{sec:mon_filtration}

This exposition is extracted from~\cite{Artal94}, which is in turn based on the book~\cite{AVG88}.

Let $H$ be a $\C$-vector space of finite dimension. Consider a nilpotent endomorphism $N: H \to H$, i.e.~there exists $k \in \N$ such that $N^k = 0$. Its Jordan canonical form is determined by the sequence of integers formed by the size of the Jordan blocks.

There is an alternative way to encode the Jordan form giving instead an increasing filtration on $H$. Let us fix $k \in \Z$; it will be called the {\em central index} of the filtration. Consider a basis of $H$ such that the matrix of $N$ in this basis is the Jordan matrix.

Each Jordan block determines a subfamily $\{ v_1,\ldots,v_r \}$ of the basis such that $N(v_1) = 0$ and $N(v_i) = v_{i-1}$ for $i=2,\ldots,r$. Let us denote by~$l(v_i)$ the unique integer 	determined by the following two conditions:
\begin{enumerate}
\item $l(v_i) = l(v_{i-1}) + 2$, $\forall i=2,\ldots,r$.
\item $\{ l(v_1), \ldots, l(v_r) \}$ is symmetric with respect to $k$.
\end{enumerate}
In fact, this integer is $l(v_i) = k-r+2i-1$, $\forall i=1,\ldots,r$, as one can check directly.

Applying this construction to all the Jordan blocks, one defines $W_l$ as the vector subspace of $H$ generated by $\{ v \mid \text{$v$ in the basis},\ l(v) \leq l \}$. This gives rise to an increasing filtration $\{ W_l \}_{l \in \Z}$ on $H$. Its graded part is denoted by $\gr_l^W (H) := W_l / W_{l-1}$ for $l \in \Z$.

Also, denote by $J_l(N)$ the number of Jordan blocks in $N$ of size $l$. Then, it is satisfied that
$$
J_l(N) = \dim ( \gr_{k-l+1}^W (H) ) - \dim ( \gr_{k-l-1}^W (H) ).
$$

\begin{prop}[\cite{Schmid73}]
There exists a unique increasing filtration $\{ W_l \}_{l \in \Z}$ such that:
\begin{enumerate}
\item $N(W_l) \subset W_{l-2}$.
\item $N^{l}: \gr_{k+l}^W (H) \to \gr_{k-l}^W (H)$ is an isomorphism.
\end{enumerate}
\end{prop}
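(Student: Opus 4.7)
The plan is to establish existence by directly verifying that the filtration constructed above (via the Jordan basis) satisfies (1) and (2), and then prove uniqueness by exhibiting an intrinsic description of $W_l$ in terms of kernels and images of powers of $N$.

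For existence, I would check the two properties block by block. Within a single Jordan block with basis $\{v_1,\ldots,v_r\}$ satisfying $N(v_1)=0$ and $N(v_i)=v_{i-1}$, the relation $l(v_i)=l(v_{i-1})+2$ given by construction yields $N(W_l)\subset W_{l-2}$ immediately. For property (2), one observes that within a single block the graded piece $\gr_{k+l}^W$ is zero unless $l\in\{-(r-1),-(r-3),\ldots,r-1\}$, and in that case it is one-dimensional, spanned by the $v_i$ with $k-r+2i-1=k+l$. Then $N^l$ sends this generator to $v_{i-l}$, which lies in $\gr_{k-l}^W$ by the symmetric choice of indices. Summing over all Jordan blocks, the iterated map $N^l:\gr_{k+l}^W(H)\to\gr_{k-l}^W(H)$ is an isomorphism.

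For uniqueness, the strategy is to show that any filtration $\{W_l\}_{l\in\Z}$ satisfying (1) and (2) is already determined by $N$. Setting
$$P_m \;:=\; \ke\bigl(N^{m+1}:\gr_{k+m}^W(H)\to \gr_{k-m-2}^W(H)\bigr), \qquad m\geq 0,$$
one deduces from property (2), by a formal argument identical to the Lefschetz primitive decomposition for an $\mathfrak{sl}_2$-action, the splitting
$$\gr_{k+l}^W(H) \;=\; \bigoplus_{j\geq \max(0,-l)} N^j\,P_{l+2j}.$$
Each summand on the right is intrinsically attached to $N$, so this pins down the Jordan type of $N$ acting on each graded piece, and hence $W$ itself. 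Equivalently, one can write the closed formula
$$W_{k+l} \;=\; \sum_{j\geq \max(0,-l)} \ke\!\bigl(N^{l+j+1}\bigr)\cap \mathrm{im}\bigl(N^j\bigr),$$
whose right-hand side involves only $N$ and $l$; verifying that this formula satisfies (1) and (2) and coincides with the Jordan construction proves existence and uniqueness simultaneously.

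The main obstacle is uniqueness. Existence reduces to combinatorial bookkeeping on each Jordan block, but uniqueness requires either the Lefschetz-type decomposition above (which rests on the $\mathfrak{sl}_2$-module structure induced by $N$ and a copy of its raising operator) or a careful verification that the intrinsic kernel-image formula captures every filtration with properties (1) and (2). Indexing conventions in both approaches are delicate and must be tracked carefully so that the central index $k$ enters correctly and the symmetry encoded in (2) is respected.
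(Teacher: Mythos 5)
Your existence check is fine and is essentially all the paper itself does: the proposition is quoted from Schmid, and the paper only remarks that the Jordan-basis filtration constructed before it satisfies (1) and (2), which is exactly your block-by-block verification. The genuine content you set out to supply is uniqueness, and there your argument does not go through as written. The primitive-decomposition step is circular: $P_m$ is defined as a kernel inside $\gr^W_{k+m}(H)$, so the primitives and the summands $N^jP_{l+2j}$ are subquotients attached to the very filtration $W$ you are trying to determine; decomposing the graded pieces of a given $W$ says nothing, by itself, about why two filtrations $W$ and $W'$ both satisfying (1)--(2) must coincide as filtrations of $H$. Your fallback is also logically insufficient: verifying that the filtration defined by $W_{k+l}=\sum_{j\geq\max(0,-l)}\ke(N^{l+j+1})\cap\mathrm{im}(N^j)$ satisfies (1) and (2) and agrees with the Jordan construction re-proves existence (and basis-independence), but uniqueness requires showing that an \emph{arbitrary} filtration satisfying (1)--(2) is forced to equal this one, and no such argument appears. (The closed formula itself is correct, so it is the missing implication, not the formula, that is the gap; note also that uniqueness implicitly requires the filtration to be bounded, i.e.\ $W_l=0$ for $l\ll0$ and $W_l=H$ for $l\gg0$, since $W_l\equiv H$ trivially satisfies (1)--(2).)

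A standard way to close the gap is induction on the nilpotency index: suppose $N^{m}\neq 0=N^{m+1}$ and $W$ satisfies (1)--(2). Property (2) for $l>m$ forces $\gr^W_{k\pm l}(H)=0$, hence $W_{k+m}=H$ and $W_{k-m-1}=0$ by boundedness. Then (1) gives $N^m(W_{k+m-1})\subset W_{k-m-1}=0$, i.e.\ $W_{k+m-1}\subset\ke(N^m)$, and the isomorphism $N^m:\gr^W_{k+m}(H)\to\gr^W_{k-m}(H)$ yields the reverse inclusion together with $W_{k-m}=\mathrm{im}(N^m)$; so these two terms are forced. Every remaining $W_l$ sits between $\mathrm{im}(N^m)$ and $\ke(N^m)$, hence is determined by the induced filtration on $\ke(N^m)/\mathrm{im}(N^m)$, which again satisfies (1)--(2) with the same central index for the induced endomorphism, now killed by its $m$-th power; induction finishes uniqueness, and your kernel-image formula can then be read off a posteriori. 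Some such argument (or an equivalent one) is what your write-up still needs for the uniqueness half.
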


This filtration is called the {\em weight filtration} of $N$ with central index~$k$.
One checks that the filtration $\{ W_l \}_{l\in\Z}$ defined above satisfies these two properties. In particular, the description of $\{ W_l \}_{l\in\Z}$ does not depend on the chosen basis.

\vspace{0.25cm}

Using this construction, the Jordan form of an arbitrary automorphism $M: H \to H$ can be described too. Let $M = M_u M_s$ be the decomposition of $M$ into its unipotent and semisimple components. It is known that $M_u M_s = M_s M_u$ and that the decomposition is unique, see~\cite{Serre66}. Recall that the semisimple part contains the information about the eigenvalues and the unipotent one, the information about the size of the Jordan blocks.
Note that the endomorphism $N:=\log (M_u)$ is nilpotent and the number of Jordan blocks of size $l$ is $J_l (N) = J_l(M_u) = J_l(M)$.

For a given $k \in \Z$, consider the weight filtration associated with $N$ with central index $k$. Due to the properties of the decomposition, the subspaces $W_l$ are invariant by the action of $M_s$, and thus by the action of $M$.
The endomorphism induced by $M_u$ on each graded part $\gr_l^W (H)$ is semisimple and, since $M_u$ is unipotent, it is indeed the identity. Hence the actions of $M$ and $M_s$ on $\gr_l^W (H)$ coincide.

The conclusion is that the Jordan form of $M$ is determined by the filtration $\{ W_l \}_{l \in \Z}$ and the action of $M$ over $\gr_l^W (H)$ for $l\in \Z$.

\vspace{0.25cm}

Let $(V,0) \subset (\C^{n+1},0)$ be a germ of an isolated hypersurface singularity at the origin. Denote by $\varphi: H^n(F,\C) \to H^n(F,\C)$ its complex monodromy.

Consider the decomposition of $H^n(F,\C)$ as a direct sum of two subspaces invariant under $\varphi$, $H^{\neq 1}$ and $H^{1}$, such that $Id-\varphi$ is invertible over $H^{\neq 1}$ and nilpotent over $H^{1}$.

Let $W^{\neq 1}$ be the weight filtration of $\varphi|_{H^{\neq 1}}$ with central index $n$. Analogously, denote by $W^{1}$ the weight filtration of $\varphi|_{H^{1}}$ with central index $n+1$. These filtrations satisfy $W^{\neq 1}_{-1} = W^{1}_1 = 0$, $W^1_{2n} = H^1$, and $W^{\neq 1}_{2n} = H^{\neq 1}$.

\begin{defi}
The {\em monodromy filtration} of the cohomology of the Milnor fiber is $W:= W^{1} \oplus W^{\neq 1}$.
\end{defi}

Note that the Jordan form of the complex monodromy is completely determined by the action of $\varphi$ over the graded parts of the monodromy filtration $W$.
Let us fix the notation for the characteristic polynomials of $\varphi$ acting on the following vector spaces: 
$$
\begin{array}{|c|c|}
\hline
\text{{\bf Vector space}} & \text{{\bf Characteristic polynomial}} \\
\hline
H := H^n(F,\C) & \Delta(t) \\[0.25cm]
\gr_{n-l}^{W^{\neq 1}} ( H ) & \Delta_{l}^{\neq 1}(t) \\[0.25cm]
\gr_{n-l+1}^{W^1} (H) & \Delta_{l}^{1}(t) \\[0.25cm]
\gr_{n-l}^{W^{\neq 1}} (H) \oplus \gr_{n-l+1}^{W^1} (H) & \Delta_l(t) \\
\hline
\end{array}
$$


Observe that the Jordan blocks of size $l$ are given by the polynomial $\frac{\Delta_{l-1}(t)}{\Delta_{l+1}(t)}$. More precisely, the multiplicity of $\zeta \in \C$ as root is this polynomial equals the number of Jordan blocks of size $l$ for the eigenvalue~$\zeta$.

\section{Steenbrink's Spectral Sequence}\label{sec:spectral_sequence}

The Jordan form of the complex monodromy is closely related to the theory of MHS, first introduced in~\cite{Deligne71a, Deligne71b, Deligne74}. By different methods, Steenbrink and Var{\v{c}}enko proved that the cohomology of the Milnor fiber admits a MHS compatible with the monodromy, see 
\cite{Steenbrink77} and \cite{Varchenko80, Varchenko81}.

\begin{defi}
A {\em Hodge structure} of weight $n$ is a pair $(H_{\Z},F)$ consisting of a finitely generated abelian group $H_{\Z}$ and a decreasing filtration $F = \{ F^p \}_{p \in \Z}$ on $H_{\C} := H_{\Z} \otimes_{\Z} \C$ satisfying $H_\C = F^p \oplus \overline{F^{n-p+1}}$ for all $p \in \Z$. One calls $F$ the {\em Hodge filtration}.
\end{defi}


An equivalent definition is obtained replacing the Hodge filtration by a decomposition of $H_\C$ into a direct sum of complex subspaces $H^{p,q}$, where $p+q=n$, with the property that $\overline{H^{p,q}} = H^{q,p}$. The relation between these two descriptions is given by
$$
H_{\C} = \bigoplus_{p+q = n} H^{p,q}, \qquad F^p = \bigoplus_{i\geq p} H^{i,n-i}, \qquad H^{p,q} = F^p \cap \overline{F^q}.
$$

The typical example of a pure Hodge structure of weight $n$ is the cohomology $H^n(X,\Z)$ where $X$ is a compact Kähler manifold. In the sequel, we will use the fact that, for compact Kähler $V$-manifold, $H^n(X,\Z)$ can also be endowed with a pure Hodge structure of weight~$n$. Deligne proved that the same is true for smooth compact algebraic varieties, see~\cite{Deligne71b}.


Above, one may replace $\Z$ by any ring $A$ contained in $\R$ such that $A \otimes_{\Z} \mathbb{Q}$ is a field and obtain $A$-Hodge structures. In particular, one uses $A = \mathbb{Q}$ or $\R$. In this way the primitive cohomology groups of a compact Kähler manifold are $\R$-Hodge structures.

\begin{defi}
A {\em mixed Hodge structure} is a triple $(H_\Z,W,F)$ where $H_\Z$ is a finitely generated abelian group, $W=\{W_n\}_{n\in \Z}$ is an increasing filtration on $H_{\mathbb{Q}}:= H_{\Z} \otimes_{\Z} \mathbb{Q}$, and $F=\{F^p\}_{p\in\Z}$ is a decreasing filtration on $H_{\C} := H_{\Z} \otimes_{\Z} \C$, such that $F$ induces a $\mathbb{Q}$-Hodge structure of weight $n$ on each graded part $\gr_n^W (H_{\mathbb{Q}})$, $\forall n \in \Z$. One calls $F$ the {\em Hodge filtration} and $W$ the {\em weight filtration}.
\end{defi}


Let us denote again by the same letter the filtration induced by $W$ on the complexification $H_{\C}$, i.e.~$W_n(H_{\C}) = W_n \otimes \C$. Then, the filtration induced by $F$ on $\gr_n^{W} (H_\C)$ is defined by
$$
F^p \left( \gr_n^W (H_\C) \right) = \frac{F^p \cap (W_n \otimes \C) + W_{n-1} \otimes \C}{W_{n-1} \otimes \C}.
$$
Thus the condition above on the weight and Hodge filtrations can be stated as, $\forall n,p \in \Z$,
$F^p \left( \gr_n^W (H_\C) \right) \oplus \overline{F^{n-p+1} \left( \gr_n^W (H_\C) \right)} = \gr^W_n ( H_\C )$.

\begin{ex}
Let $D$ be a divisor with normal crossings whose irreducible components are smooth and Kähler. Then, $H^{*}(D,\Z)$ admits a functorial MHS, see~\cite{GS75}. This results is extended to $V$-manifolds with $\mathbb{Q}$-normal crossings whose irreducible components are Kähler.
Also, in~\cite{Deligne71b}, it is proven that if $X$ is the complement in a compact Kähler manifold of a normal crossing divisor, then $H^{*}(X,\Z)$ has a functorial MHS which does not depend on the ambient variety.
\end{ex}

Let $M = X(\bd;A) = \C^{n+1}/\mu_{\bd}$ be an abelian quotient space and consider
a non-constant analytic function germ $f:(M,0) \to (\C,0)$. Let us fix an
embedded $\Q$-resolution $\pi: X \to (M,0)$ of the hypersurface
$\{ f=0 \}$.
Hence the construction of \S\ref{sec:sem_reduction} is applied to
$g = f \circ \pi : X \to \C$.

The following result can be proven as in~\cite{Steenbrink77} repeating exactly the same arguments. The main reason is that, starting with an embedded $\Q$-resolution, the total space produced after the semistable reduction is again a $V$-manifold with abelian quotient singularities, see Theorem~\ref{SE_reduction}.

\begin{theo}\label{main_th_steenbrink}
There exists a spectral sequence $\{ E_{n}^{p,q} \}$ constructed from the embedded $\Q$-resolution $\pi$ that verifies:
\begin{enumerate}\setlength{\itemsep}{5pt}
\item It converges to the cohomology of the Milnor fiber and degenerates at~$E_2$.
\item The spaces $E_{1}^{p,q}$ has a pure Hodge structure of weight $p$ respected by the differentials. In particular, $E_{2}^{p,q} = E_{\infty}^{p,q}$ also has a pure Hodge structure of weight $p$.
\item There exists a Hodge filtration on the cohomology of the Milnor fiber which induces a Hodge filtration on $E_{\infty}^{p,q}$. One constructs a weight filtration using the filtration with respect to the first index:
$$
  \qquad \gr_l^W (H^k (F,\C)) \cong E_{\infty}^{l,k-l} \cong E_{2}^{l,k-l}.
$$
\end{enumerate}

Therefore, these two filtrations provide a MHS on the cohomology of the Milnor fibration. This structure is an invariant of the singularity which only depends on the resolution $\pi$.
\end{theo}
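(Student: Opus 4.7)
The plan is to transcribe Steenbrink's construction from~\cite{Steenbrink77} into the $\Q$-resolution setting, with Theorem~\ref{SE_reduction} playing the role of the classical semistable reduction. Applying that theorem to $g = f\circ \pi$, one obtains a $V$-manifold $\widetilde{X}$ and a map $\widetilde{g}: \widetilde{X}\to D^2_{\eta^{1/e}}$ whose zero fibre $D = \widetilde{g}^{-1}(0)$ is a reduced $\Q$-normal crossing divisor. Since $\varrho$ is unramified outside $g^{-1}(0)$ and $\sigma$ is a cyclic cover of the disk, a Milnor fibre of $\widetilde{g}$ at the origin is diffeomorphic to the Milnor fibre $F$ of $f$, so it suffices to analyze $\widetilde{g}$.

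First I would define the logarithmic de Rham complex $\Omega^{\bullet}_{\widetilde{X}}(\log D)$ together with its weight filtration $W_{\bullet}$ and Hodge filtration $F^{\bullet}$. In each local chart $X(\bd;A)$ around a point of $D$, this complex is obtained as the $\mu_{\bd}$-invariants of the usual logarithmic complex on a smooth cover. Steenbrink's residue arguments then identify the $E_1$-term of the spectral sequence associated with $W_{\bullet}$ with direct sums of appropriately Tate-twisted cohomology groups of the disjoint unions $D^{(r)}$ of $r$-fold intersections of components of $D$, yielding a spectral sequence
\begin{equation*}
E_1^{p,q} \;\Longrightarrow\; H^{p+q}(F,\C)
\end{equation*}
whose $E_1$ pieces are expressed in terms of $H^{\ast}(D^{(r)},\C)$.

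For assertion~(2), the key point is that each $D^{(r)}$ is a compact K\"ahler $V$-manifold by Theorem~\ref{SE_reduction}, and therefore carries a pure Hodge structure by Baily~\cite{Baily56}; counting Tate twists yields pure weight $p$ on $E_1^{p,q}$. The differentials of the spectral sequence, being induced by combinatorial restriction and Gysin maps between strata, are morphisms of pure Hodge structures of weight zero. A standard weight argument then forces $d_r = 0$ for all $r\geq 2$, since $d_r: E_r^{p,q}\to E_r^{p+r,q-r+1}$ would otherwise connect two pure structures of different weights; this gives the degeneration at $E_2$ claimed in~(1). Assertion~(3) is then a formal consequence: the weight filtration on $H^k(F,\C)$ is induced by the first-index filtration on $E_{\infty}$, and the Hodge filtration descends from the one on $\Omega^{\bullet}_{\widetilde{X}}(\log D)$, so the compatibility packaged by $E_2$-degeneration produces the desired MHS.

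The main obstacle is verifying Steenbrink's local comparisons---principally the quasi-isomorphism between the nearby cycles complex and the filtered logarithmic double complex, and the explicit description of its graded pieces---now that $\widetilde{X}$ carries abelian quotient singularities rather than being smooth. I expect this to reduce to a uniform procedure: each local statement is checked on a smooth cover, and the corresponding statement on $\widetilde{X}$ is obtained by taking $\mu_{\bd}$-invariants, using that the finite abelian group acts compatibly with all filtrations, residue maps, and differentials involved. The content of Theorem~\ref{SE_reduction}---that the semistable reduction remains within the class of $V$-manifolds with abelian quotient singularities along a reduced $\Q$-normal crossing divisor---is precisely what allows each such local argument to reduce chart by chart to the classical smooth case treated by Steenbrink, after which no new global input is required.
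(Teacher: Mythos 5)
Your proposal takes essentially the same route as the paper: there the theorem is established precisely by observing that Steenbrink's arguments from~\cite{Steenbrink77} can be repeated verbatim because Theorem~\ref{SE_reduction} guarantees the semistable reduction of an embedded $\Q$-resolution is again a $V$-manifold with abelian quotient singularities carrying a reduced normal crossing divisor, with Baily's purity for compact K\"ahler $V$-manifolds~\cite{Baily56} replacing classical purity. Your chart-by-chart reduction to a smooth cover via $\mu_{\bd}$-invariants is exactly the mechanism the paper invokes, so the two arguments coincide.
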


In~\cite{Varchenko81}, there is another construction of the MHS on the cohomology of the Milnor fiber, using asymptotic integration. The weight filtration of both MHS coincide. Var{\v{c}}enko's definition does not depend on the resolution. Although both Hodge filtrations do not coincide, they induce the same pure Hodge structure on the graded part of the weight filtration.

\begin{theo}
The complexification of the weight filtration of the MHS of the cohomology of the Milnor fiber is exactly the monodromy filtration.

Moreover, the complex monodromy $\varphi$ acts over the first term $E_1$ of the spectral sequence and commutes with the differentials. The action induced on the complexification of $E_2 = E_\infty$ coincides with the action induced on the graded parts of the monodromy filtration.
\end{theo}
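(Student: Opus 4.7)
The plan is to follow Steenbrink's original argument from \cite{Steenbrink77}, checking that each step carries over to the embedded $\Q$-resolution setting thanks to Theorem~\ref{SE_reduction}. The crucial input is that the normalization $\widetilde{X}$ obtained after semistable reduction is again a $V$-manifold with abelian quotient singularities and $D=\widetilde{g}^{-1}(0)$ is a reduced $\Q$-normal crossing divisor, so that the logarithmic de Rham complex $\Omega^{\bullet}_{\widetilde{X}}(\log D)$ has the same formal behavior (Poincar\'e residue, filtration by order of poles $W$, Hodge filtration $F$) as in the smooth case. This is legitimate because compact K\"ahler $V$-manifolds carry a pure Hodge structure in each degree, exactly as used in Theorem~\ref{main_th_steenbrink}.

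First I would construct the monodromy action on the first page $E_1$. The spectral sequence arises from the nearby cycles $\psi_{\widetilde{g}}\C$ on $\widetilde{X}$, which by construction carry the commuting actions of the cyclic Galois group of $\varrho$ (providing the semisimple part $\varphi_s$) and of the unipotent local monodromy around $D$ (providing $\varphi_u$ and hence $N=\log\varphi_u$). Since both actions preserve the weight and Hodge filtrations on $\Omega^{\bullet}_{\widetilde{X}}(\log D)\otimes \O_D$ and commute with its differentials, they descend to $E_1^{p,q}$ and commute with $d_1$. Because the spectral sequence degenerates at $E_2$ by Theorem~\ref{main_th_steenbrink}(1), the action passes to $\gr^W_l H^k(F,\C)\cong E_\infty^{l,k-l}$. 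The identifications of $\varphi$ on $H^n(F,\C)$ with the action described above on $E_1$ come from the standard comparison between the cohomology of the Milnor fibre and the hypercohomology of $\psi_{\widetilde{g}}\C$, which is valid verbatim on $V$-manifolds.

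Next I would identify $W\otimes \C$ with the monodromy filtration by verifying the two properties characterizing Schmid's filtration. Property (1), $N(W_l)\subset W_{l-2}$, follows from the fact that $N$ acts as cup product with a class of type $(-1,-1)$, which shifts the weight filtration by $-2$; this is a formal consequence of the definition of $N$ on $\Omega^{\bullet}_{\widetilde{X}}(\log D)$ via residues. Property (2), the isomorphism $N^l:\gr^W_{k+l}\to \gr^W_{k-l}$, is the local purity statement; after splitting $H^n(F,\C)=H^1\oplus H^{\neq 1}$ it produces precisely the central indices $n+1$ and $n$ appearing in the definition of the monodromy filtration, because $\varphi_s$ acts trivially on $H^1$ but nontrivially on $H^{\neq 1}$. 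One reduces to a local computation at each stratum $E_I^\circ\cap Q_j$, using the explicit local model $t^{e}=x_0^{m_0}\cdots x_k^{m_k}$ from the proof of Theorem~\ref{SE_reduction}.

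The main obstacle will be verifying property (2) in the presence of the abelian quotient singularities; the other ingredients are formal once one accepts that $V$-manifolds behave like manifolds for $\Q$-coefficients. Concretely, one must check that passing to $\mu_{\bd}$-invariants in the local ring extension $A^{\mu_{\bd}}\subset B$ constructed in the proof of Theorem~\ref{SE_reduction} does not destroy the local purity on which Steenbrink's argument rests. The key observation is that $\Gal(Q(B)\,|\,Q(A^{\mu_{\bd}}))$ is a finite abelian group acting diagonally, so the invariants of the local logarithmic complex on $B$ compute the corresponding complex on $\widetilde{X}$, and the local weight filtration descends together with its $\mathfrak{sl}_2$-structure. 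Once this descent is established, Steenbrink's local isomorphism goes through with the multiplicities $m_{I,j}$ supplied by Theorem~\ref{SE_reduction}(1), proving both parts of the statement simultaneously.
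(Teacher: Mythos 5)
Your overall strategy---transport Steenbrink's construction to the $\Q$-resolution setting, with Theorem~\ref{SE_reduction} guaranteeing that the semistable reduction again produces a $V$-manifold with abelian quotient singularities and a reduced normal crossing divisor---is exactly the paper's point of view: the paper gives no further argument for this theorem, invoking \cite{Steenbrink77} verbatim and the comparison with Var\v{c}enko's construction \cite{Varchenko81}. Your description of the $\varphi$-action on $E_1$ (deck transformations of $\varrho$ giving the finite-order part, the logarithm of the unipotent local monodromy giving $N$), its commutation with the differentials, and its passage to $E_2=E_\infty$ by degeneration is consistent with that route.

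The genuine gap is in your treatment of property (2) of Schmid's characterization. The assertion that $N^{l}\colon \gr^W_{k+l}\to\gr^W_{k-l}$ is an isomorphism on $H^n(F,\C)$ is not a ``local purity'' statement, and it cannot be obtained by a local computation at the strata $E_I^\circ\cap Q_j$ together with descent of a local weight filtration under $\Gal(Q(B)|Q(A^{\mu_\bd}))$. The local model $t^{e}=x_0^{m_0}\cdots x_k^{m_k}$ only yields what Theorem~\ref{SE_reduction} already says: $D$ is reduced, i.e.\ the local monodromies become unipotent after base change; it carries no information about the global graded pieces of $W$. In Steenbrink's argument the map induced by $N^{l}$ is an isomorphism essentially by definition at the level of the $E_1$-terms, and the entire difficulty is to show that it survives to $E_2=E_\infty$; this is a global argument using the polarized Hodge--Lefschetz structure on the whole $E_1$-page, i.e.\ hard Lefschetz and the Hodge--Riemann bilinear relations for the compact strata $D_I$ (here compact K\"{a}hler $V$-manifolds, so Poincar\'e duality and pure Hodge theory in the orbifold setting are precisely what must be checked), combined with $\mathfrak{sl}_2$-representation theory on $E_1$. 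Your local descent step plays no role in that argument and does not substitute for it. Alternatively one can argue as the paper implicitly does: compare with Var\v{c}enko's resolution-independent MHS \cite{Varchenko81}, whose weight filtration is the monodromy filtration by construction, and identify the two weight filtrations. As written, your plan replaces the only hard step of the theorem by a local verification that does not prove it.
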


We finish this section with the explicit description of Steenbrink's spectral sequence. As we shall see, it is constructed from the divisors associated with the semistable reduction of $g := f \circ \pi : X \to \C$.

Consider the divisor $D$ associated with the semistable reduction of the embedded $\Q$-resolution $\pi$. Let us decompose $D = D_0 \cup D_1 \cup \cdots \cup D_s$ so that $D_0$ corresponds to the strict transform of the singularity and the divisor $D_{+} := D_1 \cup \cdots \cup D_s$ corresponds to the exceptional components.
Let us introduce some notation.
\begin{itemize}
\item Let $I = (i_0, \ldots, i_k)$ with $0 \leq i_0 < \cdots < i_k \leq s$. \vspace{0.25cm} 
\begin{align*}
D_I = D_{i_0,\ldots,i_k} & := D_{i_0} \cap \cdots \cap D_{i_k}, \\[0.2cm]
\check{D}_I = \check{D}_{i_0,\ldots,i_k} & := D_I \setminus \bigcup_{j \neq i_0,\ldots, i_k} ( D_j \cap D_I ).
\end{align*}
The first one is a projective $V$-manifold of dimension
$n-k$. The second one is a smooth complex variety of the same dimension.

\item Let $0 \leq i_0 < \cdots < i_k \leq s$, \ $i_j < i'_j < i_{j+1}$ with $-1 \leq j \leq k$. Denote by
$$
\hspace{1.25cm} \kappa_{i_0, \ldots, i_j, i_{j+1}, \ldots, i_k}^{i'_j}\, :\,
D_{i_0,\ldots,i_j,i'_j,i_{j+1},\ldots,i_k} \lhook\joinrel\longrightarrow
D_{i_0,\ldots,i_j,i_{j+1},\ldots,i_k},
$$
the natural inclusion.

\item Let $D^{[k]}:= \displaystyle \bigsqcup_{0 \leq i_0 < \cdots < i_k \leq s} D_{i_0,\ldots,i_k}$, \qquad
$D_{+}^{[k]}:= \displaystyle \bigsqcup_{1 \leq i_0 < \cdots < i_k \leq s} D_{i_0,\ldots,i_k}$.
\end{itemize}

\begin{defi}
Let $k \in \Z$ with $0 \leq k \leq n$ and let $i,j \in \Z$ with $i,j \geq 0$.
$$
\text{}^{k} E_{1}^{i,k-j} := \begin{cases}
H^{i} ( D_{+}^{[k]}, \mathbb{Q} ) & \text{if} \quad j=0, \\[0.25cm]
H^{i-2j} ( D^{[k+j]}, \mathbb{Q} ) & \text{if} \quad j>0.
\end{cases}
$$
\end{defi}

Note that for $j=0$ the divisor $D_{+}$ is used, while for $j>0$ the divisor $D$ is taken. All the spaces whose cohomology is considered are compact.
These spaces give rise to the first term $E_1$ of our spectral sequence~$E = \{ E^{p,q}_n \}$:
$$
  E_{1}^{p,q} := \bigoplus_{k=0}^{n} \text{}^{k} E_1^{p,q},
$$
where $^{k} E_{1}^{p,q} = 0$ if it is not defined previously.

Note that the space $^{p} E_1^{i,k-j}$ possesses a natural pure Hodge structure of weight $i-2j$, since it is defined as the cohomology of degree $i-2j$ of a compact Kähler $V$-manifold. Performing an index shifting $\widetilde{H}^{p+j,q+j}:= H^{p,q}$, $^{p} E_1^{i,k-j}$ also has a pure Hodge structure of weight $i$, cf.~Theorem~\ref{main_th_steenbrink}.

\vspace{0.25cm}

It still remains to define the differentials. In the first term $E_1$ the differentials are of type $(0,1)$, i.e.~upward vertical arrows.


Let us resume the notation above. Let
$$
\Big( \kappa_{i_0, \ldots, i_k}^{i'_j} \Big)_{*}\, :\,
H_{*} \Big( D_{i_0,\ldots,i_j,i'_j,i_{j+1},\ldots,i_k}, \mathbb{Q} \Big) \longrightarrow H_{*} \Big( D_{i_0,\ldots,i_j,i_{j+1},\ldots,i_k}, \mathbb{Q} \Big)
$$
be the homomorphism induced by the inclusion on the homology groups. Using Poincaré duality for compact $V$-manifolds, one has the following Gysin-type maps:
$$
\xymatrix{
\ar @{} [dr] |{\#} H_{*} \Big( D_{i_0,\ldots,i_j,i'_j,i_{j+1},\ldots,i_k}, \mathbb{Q} \Big) \ar[r]^{\Big( \kappa_{i_0, \ldots, i_k}^{i'_j} \Big)_{*}} \ar[d]_{DP}^{\cong} & H_{*} \Big( D_{i_0,\ldots,i_j,i_{j+1},\ldots,i_k}, \mathbb{Q} \Big) \ar[d]^{DP}_{\cong} \\
H^{2(n-k-1)-*} \Big( D_{i_0,\ldots,i_j,i'_j,i_{j+1},\ldots,i_k}, \mathbb{Q} \Big) \ar@{-->}[r] & H^{2(n-k)-*} \Big( D_{i_0,\ldots,i_j,i_{j+1},\ldots,i_k}, \mathbb{Q} \Big) \\
}
$$

These arrows are only possible if the spaces are compact. It is always the case except for $k=0$ and $i_0=0$, where the corresponding map is defined as zero.
By abuse of notation, the morphism associated with the dashed arrow which completes the previous diagram is again denoted by $\big( \kappa_{i_0, \ldots, i_k}^{i'_j} \big)_{*}$.

\begin{defi}
The differentials on $^{k} E_1$, $\text{}^{k} \delta : \text{}^{k} E_1^{i,k-j-1} \to \text{}^k E_1^{i,k-j}$ are defined by
$$
  \text{}^{k} \delta\, |_{H^{i-2(j+1)}(D_{i_0,\ldots,i_{k+j+1}},\mathbb{Q})} := \sum_{l=0}^{k+j+1} (-1)^l \Big( \kappa_{i_0,\ldots,\widehat{i_l},\ldots,i_{k+j+1}}^{i_l} \Big)_{*}.
$$
\end{defi}

\begin{remark}
The pair $(\text{}^k E_1, \text{}^k \delta)$ is the term $E_1$ of the spectral sequence that provides the MHS of
$$
  \bigsqcup_{0 \leq i_0 < \cdots < i_k \leq s} \check{D}_{i_0,\ldots,i_k},
$$
which is the complement of a divisor with normal crossings on a projective variety.
\end{remark}

To finish with the description of the differentials, the interactions between different $\text{}^{k} E_1$ have to be taken into account. These differentials are of Mayer-Viétoris type. Denote by $\big( \kappa_{i_0,\ldots,i_{k+j}}^{i_l}
\big)^{*}$ the corresponding homomorphism on the cohomology groups.

\begin{defi}
The morphisms $\text{}^{k,k+1} \delta : \text{}^{k} E_1^{i,k-j} \to \text{}^{k+1} E_1^{i,k-j+1}$ are defined as
$$
  \text{}^{k,k+1} \delta\, |_{H^{i-2j} (D_{i_0,\ldots,i_{k+j}},\mathbb{Q})} := \sum_{\l \neq i_0,\ldots,i_{k+j}} (-1)^{e(l;\, i_0,\ldots,i_{k+j})} \Big( \kappa_{i_0,\ldots,i_{k+j}}^{i_l} \Big)^{*},
$$
where $e(l;\, i_0,\ldots,i_{k+j})$ is the number of coefficients $i_0,\ldots,i_{k+j}$ less than $l$.
\end{defi}

\begin{remark}
The pair $(\text{}^k E_1^{i,k}, \text{}^{k,k+1} \delta)$ is exactly the term $E_1$ of the spectral sequence providing the MHS of the divisor with normal crossings~$D_{+}$ which appears in~\cite{Deligne71b}. Observe that the first two columns of this spectral sequence for $k=0$ coincides with the first two columns of the term $E_1$ of $\{ E^{p,q}_{n} \}$.
\end{remark}

\begin{defi}
The direct sum of the differentials $\text{}^k \delta$ and $\text{}^{k,k+1} \delta$ is the differential $\delta$ of the term $E_1$.
\end{defi}

\begin{figure}[ht]
\centering
\includegraphics{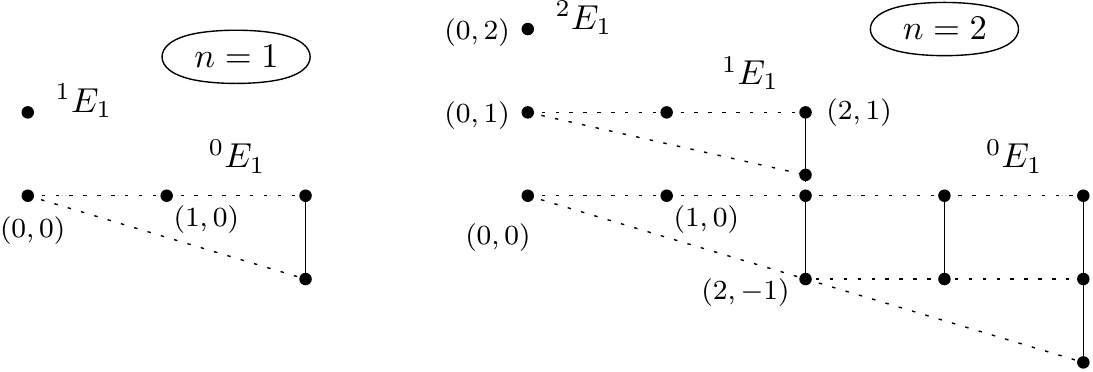}
\caption{Decomposition of $E = \{ E^{p,q}_n \}$ for $n=1,2$.}
\end{figure}

As a consequence of this spectral sequence, the standard result
about the maximal size of the Jordan blocks holds for embedded
$\Q$-resolutions too.

\begin{prop}\label{maximal_Jordan_blocks}
Let $\widetilde{K}$ be the dual complex associated with $D_{+}$
and let $\varphi: H^n(F,\C) \to H^n(F,\C)$ be the monodromy of
$\{ f = 0 \} \subset X(\bd;A)$.
Then the Jordan blocks of size
$n+1$ is determined by the characteristic polynomial of $\varphi$
acting on $H^n (\widetilde{K},\C)$.
\end{prop}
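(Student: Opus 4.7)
My plan is to identify the extremal graded piece $\gr^W_0(H^n(F,\C))$ of the weight filtration with $H^n(\widetilde K,\C)$, and then to read off the Jordan blocks of maximal size directly from the formula of Section~\ref{sec:mon_filtration} expressing Jordan multiplicities as quotients of the polynomials $\Delta_l(t)$.

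The first step is to specialize the formula ``Jordan blocks of size $l$ are given by $\Delta_{l-1}(t)/\Delta_{l+1}(t)$'' to $l=n+1$. Because $W^{\neq 1}_{-1}=W^1_1=0$, the graded pieces $\gr^{W^{\neq 1}}_{-1}$, $\gr^{W^{\neq 1}}_{-2}$, $\gr^{W^1}_{0}$ and $\gr^{W^1}_{-1}$ are all zero, so $\Delta_{n+1}(t)=\Delta_{n+2}(t)=1$, whereas $\Delta_n(t)$ reduces to the characteristic polynomial of $\varphi$ on $\gr^{W^{\neq 1}}_0(H^{\neq 1})=\gr^W_0(H^n(F,\C))$. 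Hence the multiplicity of $\lambda$ as a root of the characteristic polynomial of $\varphi$ acting on $\gr^W_0(H^n(F,\C))$ is exactly the number of Jordan blocks of size $n+1$ with eigenvalue $\lambda$.

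The second step is to compute this extremal piece via the spectral sequence. Part~(3) of Theorem~\ref{main_th_steenbrink} gives $\gr^W_0(H^n(F,\C))\cong E_2^{0,n}$. Setting $i=0$ in the definition of ${}^k E_1^{i,k-j}$, the requirement that the cohomological degree $i-2j$ be nonnegative forces $j=0$, so only the first clause of the definition contributes and $E_1^{0,q}=H^0(D_+^{[q]},\C)$. For the same degree reason, the Gysin-type component ${}^k\delta$ (which raises the cohomological degree by $2$) vanishes on this column, leaving only the Mayer--Vi\'etoris piece ${}^{k,k+1}\delta$; by construction this is the alternating-sign restriction map, i.e.\ precisely the simplicial coboundary of the dual complex $\widetilde K$ of $D_+$. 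Passing to cohomology in top degree then yields
\[
E_2^{0,n} \;=\; H^n(\widetilde K,\C).
\]

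The third step is to verify that this identification is $\varphi$-equivariant. The monodromy acts on the spectral sequence via the generator of the covering $\varrho:\widetilde X\to X$ of Theorem~\ref{SE_reduction}. On $H^0(D_+^{[q]},\C)=E_1^{0,q}$ this action is the permutation of connected components of the multi-intersections $D_{i_0,\ldots,i_q}$, which coincides with the natural $\varphi$-action on the simplicial cochains of $\widetilde K$. Combined with the previous step, the proposition follows.

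The main obstacle will be the middle step: verifying carefully that no $j>0$ summand and no Gysin-type differential contributes to the column $p=0$, and\,---\,more subtly\,---\,that the correct combinatorial object is the dual complex of $D_+$ (exceptional components only) rather than of the full $D=D_0\cup D_+$. This is forced precisely by the choice of $D_+^{[k]}$ in the $j=0$ clause of the definition of ${}^k E_1^{i,k-j}$. A secondary subtlety is the equivariance in the third step: one must interpret the vertices and higher cells of $\widetilde K$ as indexing irreducible components of multi-intersections (rather than the index sets $I$ themselves), so that the cyclic permutations produced by the covering $\varrho$ in Theorem~\ref{SE_reduction} act correctly on the cochain complex.
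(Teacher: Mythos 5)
Your argument is correct and follows essentially the same route as the paper: you identify the column $p=0$ of the spectral sequence with $C^{\bullet}(\widetilde K)\otimes\C$ (noting that $j>0$ terms and the Gysin differentials cannot contribute there), use Theorem~\ref{main_th_steenbrink} to get $\gr^W_0 H^n(F,\C)\cong E_2^{0,n}=H^n(\widetilde K,\C)$ compatibly with $\varphi$, and extract the size-$(n+1)$ blocks from the action on $\gr^W_0$ via the $\Delta_{l-1}/\Delta_{l+1}$ formula. The only step to make explicit is the appeal to the theorem that the complexified weight filtration of the MHS coincides with the monodromy filtration, which is what lets you pass between the $W$ of Section~\ref{sec:mon_filtration} used in your first step and the $W$ of Theorem~\ref{main_th_steenbrink} used in your second; the paper's proof invokes this coincidence explicitly.
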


\begin{proof}
Recall that $E_1^{0,q} = H^{0}(D_{+}^{[q]},\C)$. Hence the first column of the 
spectral sequence $(E_1^{0,\bullet},\delta)$
is isomorphic to the cochain complex
$C^{\bullet} (\widetilde{K}) \otimes \C$.
Consequently $H^n (\widetilde{K},\C) \cong H^n(E_1^{0,\bullet})$; besides
the action of $\varphi$
on both complexes commutes with this isomorphism.
On the other hand, since $W$ coincides with the monodromy filtration,
the Jordan blocks of size $n+1$ are determined by the action of $\varphi$
on $\gr^W_0 ( H^n (F,\C) )$ which is by Theorem~\ref{main_th_steenbrink}
isomorphic to $E_{\infty}^{0,n} = E_2^{0,n} = H^n (E_1^{0,\bullet})$.
The latter isomorphism is again compatible with the action of $\varphi$.
This concludes the result.
\end{proof}

\begin{remark}
Analogously, one shows that $H^q(\widetilde{K},\C) \cong \gr^W_0 ( H^q(F,\C) )$
and thus
$H^0(\widetilde{K},\C) = \C$ and $H^q(\widetilde{K},\C) = 0$ for $q \neq 0,n$.
\end{remark}

This section ends with the explicit description of the spectral sequence $\{ E^{p,q}_n \} \otimes_{\mathbb{Q}} \C$ for the cases $n=1,2$. For $n=1$, let us denote with a triangle the terms belonging to $\text{}^1 E_1$ and with a circle the ones belonging to $\text{}^{0} E_1$.

\begin{figure}[ht]
\begin{small}
\centerline{
\xymatrix{
\blacktriangle \, H^{0} ( D_{+}^{[1]}, \C ) & (k=1) \\
\bullet \, H^{0} ( D_{+}^{[0]}, \C ) \ar[u]^{\text{}^{0,1} \delta} \ar@{.}[r] \ar@<-5pt>@{.}[rrd]
& \bullet \, H^1 ( D_{+}^{[0]}, \C ) \ar@{.}[r] & \bullet \, H^2 ( D_{+}^{[0]}, \C ) \\
& (k=0) & \bullet \, H^{0} ( D^{[1]}, \C ) \ar[u]_{\text{}^{0} \delta}
}}
\end{small}
\end{figure}

For surfaces, that is $n=2$, denote with a square the terms belonging to $\text{}^2 E_1$, with a triangle the ones belonging to $\text{}^1 E_1$, and finally with a circle those coming from $\text{}^{0} E_1$.

\begin{figure}[ht]
\begin{small}
\centerline{
\xymatrix@C=10pt{
\blacksquare \, H^{0}(D_{+}^{[2]}) & (k=2) \\
\blacktriangle \, H^{0}(D_{+}^{[1]}) \ar[u]^{\text{}^{1,2} \delta} \ar@{.}[r] \ar@<1pt>@{.}[drr]
& \blacktriangle \, H^{1}(D_{+}^{[1]}) \ar@{.}[r] & \blacktriangle \, H^{2}(D_{+}^{[1]}) & (k=1) \\
\bullet \, H^{0}(D_{+}^{[0]}) \ar[u]^{\text{}^{0,1} \delta} \ar@{.}[r] \ar@<-10pt>@{.}[rrrrdd]
& \bullet \, H^{1}(D_{+}^{[0]}) \ar[u]^<<<<<{\text{}^{0,1} \delta} \ar@{.}[rr]|{\hspace{1.0cm}} & \mbox{$\begin{array}{c} \blacktriangle \, H^{0} (D^{[2]}) \\ \oplus \\ \bullet \, H^{2}(D_{+}^{[0]}) \end{array}$} \ar[u]_<<<<{\text{}^{1} \delta\, \oplus\, \text{}^{0,1} \delta} & \bullet \, H^{3}(D_{+}^{[0]}) \ar@{.}[r] & \bullet \, H^{4}(D_{+}^{[0]}) \\
& (k=0) & \bullet \, H^{0}(D^{[1]}) \ar[u]^<<<<<{\text{}^{0} \delta} \ar@{.}[r] & \bullet \, H^{1}(D^{[1]}) \ar[u]^{\text{}^{0} \delta} \ar@{.}[r] & \bullet \, H^{2}(D^{[1]}) \ar[u]_{\text{}^{0} \delta} \\
&&&& \bullet \, H^{0}(D^{[2]}) \ar[u]_{\text{}^{0} \delta}
}}
\end{small}
\end{figure}

\section{Examples}\label{sec:examples}

As an application we illustrate the use of all the preceding results presented in this work with several examples including a plane curve and a YLS.
In particular, we provide infinite pairs of irreducible YLS having the same
complex monodromy with different topological type.

\subsection{Plane Curves}\label{subsec:ex_plane_curve}

Assume $\gcd(p,q) = \gcd(r,s) = 1$ and $\frac{p}{q} < \frac{r}{s}$. Let $f = (x^p + y^q) (x^r + y^s)$ and consider ${\bf C}_1 = \{ x^p + y^q = 0 \}$ and ${\bf C}_2 = \{ x^r + y^s = 0 \}$. An embedded $\Q$-resolution of $\{ f = 0 \} \subset \C^2$
is calculated in~\cite[Ex.~4.8]{AMO11b} from a $(q,p)$-blow-up at the origin of $\C^2$, followed by the $(s,qr-ps)$-blow-up at a point of type $(q;-1,p)$.
The final situation is shown in Figure~\ref{fig_blowup_curve3}.

\begin{figure}[ht]
\centering\includegraphics{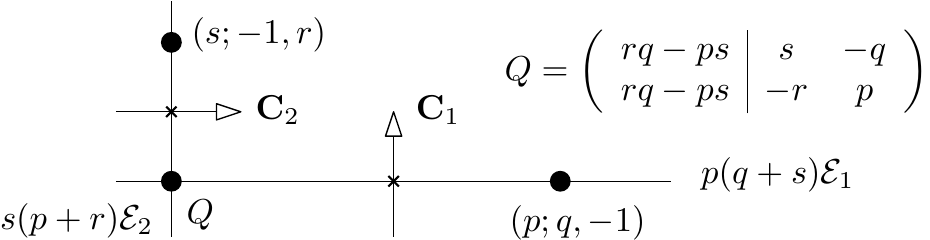}
\caption{Embedded $\Q$-resolution of $f=(x^p+y^q)(x^r+y^s)$.}
\label{fig_blowup_curve3}
\end{figure}

The self-intersection numbers are calculated using~\cite[Prop.~7.3]{AMO11b} and the intersection matrix is
$A = \frac{1}{rq-ps} \left(\begin{smallmatrix} -r/p & 1 \\ 1 & -q/s \end{smallmatrix}\right)$. By~\cite[Th.~2.8]{Martin11}, the characteristic polynomial is
$$
  \Delta (t) = \frac{\big( t-1 \big) \big( t^{p(q+s)}-1 \big) \big( t^{s(p+r)}-1 \big)}{\big( t^{q+s}-1 \big) \big( t^{p+r}-1 \big)}.
$$

The semistable reduction is studied using Theorem~\ref{SE_reduction}.
The main relevant data to compute are $m(E,Q)$
and the genera $g_1$ and $g_2$ of the new exceptional
divisors $D_1$ and $D_2$ in the semistable reduction. As explained
in~\cite{AMO11b}, the equation of the total transform at $Q$
is $x^{p(q+s)} y^{s(p+r)}$ in the quotient space of
Figure~\ref{fig_grafo-res2_SE}. By Lemma~\ref{prime_factors},
$$
m(E,Q) = \gcd \Big( p(q+s), s(p+r), A, B \Big), 
$$
where
$A = \frac{p(q+s) \cdot s + s(p+r) \cdot (-q)}{rq-ps} = - s$ and
$B = \frac{p(q+s) \cdot (-r) + s(p+r) \cdot p}{rq-ps} = - p$.
Consequently, $m(E,Q) = \gcd(p,s)$.

The restriction $\varrho\,|: D_1 \to \mathcal{E}_1$ is a branched covering
of $p(q+s)$ sheets ramifying over $3$ points, where
the number of preimages are $\gcd(p,s)$, $1$, and $q+s$. Analogous situation holds for $D_2$. Hence, by virtue of the Riemann-Hurwitz formula, the genera are
$$
g_1=\displaystyle\frac{(p-1)(q+s)-\gcd(p,s)+1}{2}, \quad
g_2=\displaystyle\frac{(s-1)(p+r)-\gcd(p,s)+1}{2}.
$$

The dual graph of the new normal crossing divisor after the semistable reduction
process is shown in Figure~\ref{fig_grafo-res2_SE}.

\begin{figure}[h t]
\centering
\includegraphics{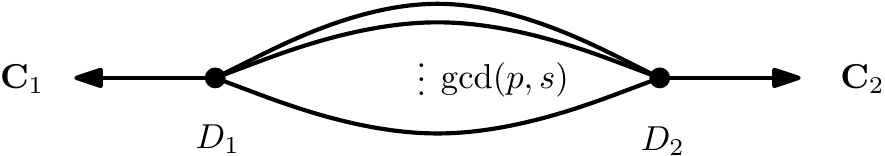}
\caption{Dual graph of the semistable reduction.}
\label{fig_grafo-res2_SE}
\end{figure}

The MHS on the cohomology of the Milnor fiber $H^1(F,\C)$ is obtained from Steenbrink's spectral sequence:
$$
  H^1 (F, \C) = \underbrace{H^{0,0}}_{\gr^W_0 H^1 (F,\C)} \oplus \quad \underbrace{H^{0,1} \oplus H^{1,0}}_{\gr^W_1 H^1 (F,\C)} \quad \oplus \underbrace{H^{1,1}}_{\gr^W_2 H^1 (F,\C)},
$$
where $H^{0,0} = \C^{\gcd(p,s)-1}$, $H^{0,1} = \C^{g_1+g_2} = \overline{H^{1,0}}$,
and $H^{1,1} = \C^{\gcd(p,s)}$.

The action of the monodromy on $\gr^W_0 H^1 (F, \C)$ is given by the polynomial $\frac{t^{\gcd(p,s)}-1}{t-1}$. Note that this provides the eigenvalues of the monodromy with Jordan blocks of size $2$. This has to do with the fact 
that the dual graph possesses $\gcd(p,s)-1$ cycles,
see Proposition~\ref{maximal_Jordan_blocks},
cf.~\cite[Ch.~V.4]{Martin11PhD} for a more detailed exposition.
Also, note that this example has already been treated in~\cite{Grima74} for the 
cases $(p,q,r,s) = (21,44,14,11), (33,28,22,7)$; they both have the same monodromy
but different topological type as one can easily check.

\begin{remark}
The previous example is generalized to several branches with no significant
changes. Let $f = ( x^{p_1} + y^{q_1} ) \cdots ( x^{p_k} + y^{q_k} )$,
$k \geq 1$, $\frac{p_1}{q_1} < \cdots < \frac{p_k}{q_k}$, and $p_i, q_i \geq 1$
no necessarily coprime, $d_i = \gcd(p_i,q_i)$.

\begin{figure}[ht]
\centering
\includegraphics{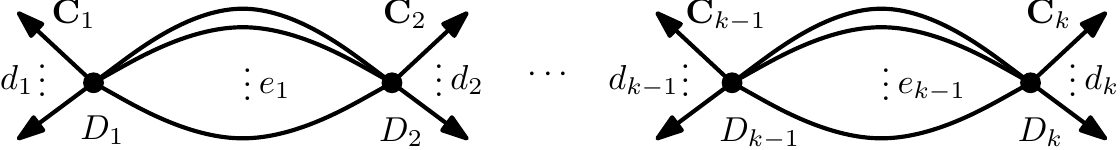}
\caption{Dual graph of the semistable reduction of $f$.}
\label{fig_grafo-res2_SE_general}
\end{figure}

Denote $e_i = \gcd( p_1 + \cdots + p_i, q_{i+1} + \cdots + q_k )$, $i=1,\ldots,k-1$.
Then the Jordan blocks of size $2$ is given by the polynomial $\prod_{i=1}^{k-1}
(t^{e_i}-1) / (t-1)$. The dual graph of the semistable reduction is shown in
Figure~\ref{fig_grafo-res2_SE_general}.
\end{remark}

\subsection{Yomdin-L\^{e} Surface Singularities}

Let $(V,0)$ be the singularity defined by $f = f_m(x,y,z) + z^{m+k}$. Assume that ${\bf C} = \{f_m = 0\} \subset \P^2$ has only one singular point $P = [0:0:1]$, which is locally isomorphic to the cusp $x^q+y^p$, $\gcd(p,q)=1$. Consider the weight vector $\w = (\frac{k p}{k_1 k_2}, \frac{k q}{k_1 k_2}, \frac{p q}{k_1 k_2})$, where $k_1= \gcd(k,p)$ and $k_2 = \gcd(k,q)$.

An embedded $\Q$-resolution of $\{ f = 0 \} \subset \C^3$
is calculated in~\cite{Martin12}. It is required to perform first the standard
blow-up at the origin of~$\C^3$ and then the $\w$-blow-up at the point
$P=[0:0:1] \in \mathbf{C} \cap E_1$, where the total transform is not a normal crossing divisor. Denote by $E_1 := \widehat{E}_1$ and by $E_2$ the second exceptional divisor. The final situation is shown in Figure~\ref{fig_acampo6}.

\begin{figure}[h]
\centering
\includegraphics{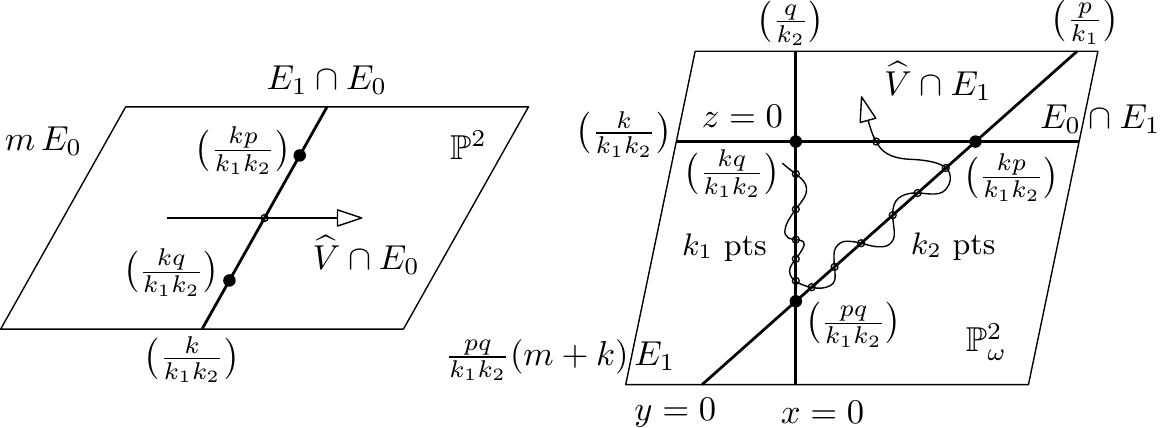}
\caption{Intersection of $E_0$ and $E_1$ with the rest of components.}
\label{fig_acampo6}
\end{figure}

Applying the generalized A'Campo's formula \cite[Th.~2.8]{Martin11}, the characteristic polynomial of $(V,0)$ is
$$
\Delta_{(V,0)}(t)
= \frac{\big(t^m-1\big)^{\chi(\P^2\setminus{\bf C})}}{t-1} \cdot \frac{\big(t^{m+k}-1 \big) \big( t^{\frac{p q}{k_1 k_2}(m+k)}-1 \big)^{k_1 k_2}}{\big( t^{\frac{p}{k_1}(m+k)}-1 \big)^{k_1} \big( t^{\frac{q}{k_2}(m+k)}-1 \big)^{k_2}}.
$$

The semistable reduction is studied using Theorem~\ref{SE_reduction}.
We only discuss the action of the monodromy on $\gr^W_1 \! H$ and
$\gr^W_4 \! H$, being $H:=H^2(F,\C)$, which encode the $2$ and $3$-Jordan blocks.
The preimages of $\widehat{V}$, $E_0$, $E_1$ under $\varrho$ are denoted by
$\widehat{V}$, $D_0$, $D_1$; they are all irreducible varieties.

The fifth column of the generalized Steenbrink's spectral sequence gives rise
the following exact sequence of vector spaces
$$
0 \longrightarrow \gr^W_4\!H \longrightarrow H^0(D^{[2]}) \longrightarrow
H^2 (D^{[1]}) \longrightarrow H^4(D^{[0]}_{+}) \longrightarrow 0.
$$

Note that $D_0$, $D_1$, $\widehat{V} \cap D_0$, $\widehat{V} \cap D_1$,
$D_0 \cap D_1$, and $\widehat{V} \cap D_0 \cap D_1$ are all irreducible
varieties because they intersect $\widehat{V}$. Thus $h^4(D^{[0]}_{+})
= h^0(D^{[0]}_{+}) = 2$, $h^2(D^{[1]}) = h^0(D^{[1]}) =3$, and
$h^0(D^{[2]}) = 1$. Therefore $\gr^W_4\!H$ is trivial and then there are neither
$2$-Jordan blocks for $\lambda=1$ nor $3$-Jordan blocks ($\lambda \neq 1$).

From the second column of the spectral sequence,
$$
  0 \longrightarrow H^1(D^{[0]}_{+}) \longrightarrow H^1(D^{[1]}_{+})
  \longrightarrow \gr^W_1 \! H \longrightarrow 0.
$$

The restriction $\varrho\,|:D_1 \to E_1 \cong \P^2_{\w}$ is a branched covering of
$(m+k)\frac{pq}{k_1k_2}$ sheets ramifying over the axes and the curve
$\widehat{V} \cap E_1 = \{ x^q + y^p + z^k = 0 \}$. The composition of the previous
map with $E_1 \to \P^2$, $[x:y:z]_{\w} \mapsto [x^q:y^p:z^k]$, is an
abelian covering ramifying over $4$ lines in general position. This implies
$H^1(D_1) = 0$. On the other hand, note that the cohomology $H^1(D_0)$ is
determined by the pair $(\P^2,\mathbf{C})$ and hence so is
$H^1(D^{[0]}_{+}) = H^1(D_0)$.

Finally, the first cohomology of the Riemann surface $D^{[1]}_{+} = D_0 \cap D_1$
is studied. Using Lemma~\ref{prime_factors}, one checks that,
$m(\widehat{V} \cap E_0 \cap E_1) = 1$, 
$m(\text{``generic point of $E_0\cap E_1$''}) = \gcd(m,pq)$, and
$$
m\left(E,\left(\frac{kq}{k_1k_2}\right)\right) = \gcd(m,p), \quad
m\left(E,\left(\frac{kp}{k_1k_2}\right)\right) = \gcd(m,q).
$$
This means that $\varrho\,|:D_0 \cap D_1 \to E_0 \cap E_1$ is a branched
covering of $\gcd(m,pq)$ sheets ramifying over $3$ points where the number
of preimages are $\gcd(m,p)$, $1$, and $\gcd(m,q)$. It follows that
$$
\Delta_{\gr^W_1 H} (t) =
\frac{1}{\Delta_{H^1(D_0)}(t)}
\cdot \frac{\left( t-1 \right) \left( t^{\gcd(m,pq)}-1 \right)}
{\left( t^{\gcd(m,p)}-1 \right) \left( t^{\gcd(m,q)}-1 \right)}. 
$$

\begin{remark}
The singularity of the tangent cone in the previous example is so simple
that the Jordan blocks of size $2$ and $3$ (for $\lambda \neq 1$ or $\lambda =1$)
do not depend on $k$. However, this is not true in general, see below.
\end{remark}

Let $V$ be the YLS defined by $f = f_m(x,y,z) + z^{m+k}$ where ${\bf C} = \{f_m = 0\} \subset \P^2$ has only one singular point $P = [0:0:1]$, which is locally isomorphic to $(x^p+y^q)(x^r+y^s)$ with $\gcd(p,q)=\gcd(r,s)=1$ and $\frac{p}{q}
< \frac{r}{s}$, cf.~\S\ref{subsec:ex_plane_curve}.
Using the techniques presented in this paper and the $\Q$-resolution 
calculated in \cite{Martin12}, we were able to compute the following:
$$
\Delta_{\gr^W_1 \! H}(t) = \frac{\displaystyle \frac{\big( t^{(m,p(q+s))}-1 \big) \big(t^{(m,s(p+r))}-1\big)}{\big(t^{(m,q+s)}-1\big)\big(t^{(m,p+r)}-1\big)}
\cdot \frac{\big(t^{(m+k)\frac{(p,s)}{(k,p,s)}}-1\big)^{(k,p,s)}}{(t^{m+k}-1)
(t-1)^{(k,p,s)-1}}}{\displaystyle\Delta_{H^1(D_0)}(t) \cdot \Bigg( \frac{t^{(m,p,s)}-1}{t-1} \Bigg)^{3}},
$$
$$
\Delta_{\gr^W_4 \! H}(t) = \frac{t^{(m,p,s)}-1}{t-1} \cdot (t-1)^{(k,p,s)},
$$
where for simplicity $(a,b)$ denotes $\gcd(a,b)$. Note that the cohomology $H^1(D_0)$
is determined by the pair $(\P^2,\mathbf{C})$, the first factor of the numerator
has to do with the characteristic polynomial of the tangent cone at $[0:0:1]$, and
the second factor of both the numerator and denominator is related to the $2$-Jordan
blocks of the tangent cone, see Example in \S\ref{subsec:ex_plane_curve}.


In particular, considering $(p,q,r,s) = (21,44,14,11)$, $(33,28,22,7)$ and
$m$ generic so that $\Delta_{H^1(D_0)}(t)=1$,
one obtains infinite pairs of irreducible YLS having the same complex
monodromy and different topological type. Examples of this kind have already
been found, for instance, in~\cite{Artal91} studying the associated Seifert form,
which determines the integral monodromy.
We ignore whether our preceding example have the same integral monodromy or
the same Seifert form.

\def\cprime{$'$}

\end{document}